\newtheorem{Theorem} {Theorem} [section]
\newtheorem{Proposition} [Theorem] {Proposition}
\newtheorem{Lemma} [Theorem] {Lemma}
\newtheorem{Corollary} [Theorem] {Corollary}
\newtheorem{Problem} [Theorem] {Problem}
\newtheorem{Conjecture} [Theorem] {Conjecture}
\newtheorem{Question} [Theorem] {Question}
\newcommand{\Ff}{{\mathbb F}}
\newcommand{\cI}{{\mathcal I}}
\newcommand{\cJ}{{\mathcal J}}
\newcommand{\cL}{{\mathcal L}}
\newcommand{\cO}{{\mathcal O}}
\newcommand{\cP}{{\mathcal P}}
\newcommand{\cS}{{\mathcal S}}
\newcommand{\PG}{\mathrm{PG}}
\newcommand{\Sp}{\mathrm{Sp}}
\newcommand{\<}{\langle}
\renewcommand{\>}{\rangle} 
\renewcommand{\phi}{\varphi} 
\def\ps@pprintTitle{%
  \let\@oddhead\@empty
  \let\@evenhead\@empty
  \def\@oddfoot{\reset@font\hfil\thepage\hfil}
  \let\@evenfoot\@oddfoot
}
\title{Regular Sets of Lines in Rank 3 Polar Spaces}
\author[1]{Ferdinand Ihringer\fnref{fn1}\fnref{fn2}}
\ead{ferdinand.ihringer@gmail.com}
\affiliation[1]{
 organization={Department of Mathematics,
   Southern University of Science and Technology (SUSTech)},
 city={Shenzhen},
 country={China}
}
\author[2]{Morgan Rodgers\corref{cor1}\fnref{fn3}}
\ead{morgan.rodgers@rptu.de}
\affiliation[2]{
organization={Department of Mathematics, RPTU Kaiserslautern-Landau},
addressline={Gottlieb-Daimler-Stra{\ss}e 48},
city={Kaiserslautern},
postcode={67663},
country={Germany}
}
\begin{document}

\begin{abstract}
 There are 6 families of finite polar spaces of rank $3$.
 The set of lines in a rank $3$ polar space
 form a rank $5$ association scheme.
 We determine the regular sets of minimal size
 in several of these polar spaces, and describe some interesting examples.
 We also give a new family of Cameron--Liebler sets of generators
 in the polar spaces $O^+(10,q)$ when $q = 3^h$
 using a regular set of lines in $O(7,q)$.
\end{abstract}

\begin{keyword}
 Finite classical polar space \sep
 Cameron-Liebler set \sep
 regular set \sep
 association scheme
\end{keyword}

\maketitle

\section{Introduction}

A \textit{regular set} or \textit{equitable bipartition}
in a (finite simple) graph is a set of vertices $Y$
such that there exist constants $a$ and $b$ for which
each vertex in $Y$ has $a$ neighbors in $Y$,
while each vertex not in $Y$ has $b$ neighbors in $Y$.
Such sets can only exist for graphs which are either biregular or regular.
If $Y$ is a regular set, then $a-b$ is an eigenvalue of
the adjacency matrix of the corresponding graph~\cite[\S1.1.13]{BvM}.
We call $Y$ a \textit{regular set} of an association scheme, cf.~\cite[\S2]{BCN},
if $Y$ is a regular set for all the graphs in the association scheme.
Regular sets appear in many geometric and combinatorial contexts, where examples include
\textit{Cameron-Liebler classes},
\textit{intriguing sets}, and \textit{hemisystems} in finite geometry.
Regular sets are also closely related to the concepts of
\textit{Boolean degree $d$ functions}~\cite{FI2019} and
\textit{graphical designs}~\cite{Steinerberger2020}.
In recent years, regular sets in graphs coming from
finite geometries have received
a significant amount of attention, for instance,
see~\cite{BKLP2007,BL2023,BLP2009,CP2019,DBDMR2016,DBDIM2022,FLX2023,
FMRXZ2021,FMX2015,FX2023,GM2018,GMP2018,GM2014,MDW2020,Mogilnykh2022,RSV2018}.
Note that the investigation of regular sets in conjugacy
class schemes is also of recent interest, see~\cite{DMRTP2023,ES2022}.
While many of the association schemes of interest are in fact distance regular graphs,
here we investigate regular sets of lines in polar spaces of rank $3$,
which are in some sense the smallest association schemes associated with finite geometries which are not distance-regular graphs.

The parameters of this association scheme have been long known~\cite{Eisfeld1999,Vanhove2011}.
Special sets of lines have been investigated in the context
of EKR sets of buildings~\cite{IMM2018,Metsch2019}, in the work~\cite{DBDR2023}
on a generalization of ovoids, Boolean degree $1$ functions~\cite{FI2019},
and are also relevant
for the investigation of $1$-systems~\cite{HM1997,ST1994}.
We observe that the line sets of generalized quadrangles,
rank $3$ polar spaces,
spreads and generalized hexagons occur as regular sets
in our association scheme.
We characterize these in several cases.
To our knowledge these characterizations are mostly new.
Lastly, we describe a nontrivial example for a regular
set in $O^+(6, 3)$.

\section{Preliminaries}

There are six families of finite classical rank 3 polar spaces,
the hyperbolic quadrics $O^+(6, q)$,
the parabolic quadrics $O(7, q)$,
the elliptic quadrics $O^-(8, q)$,
the symplectic polar spaces $\Sp(6, q)$,
and the two types of Hermitian polar spaces $U(6,q)$
and $U(7,q)$ (where $q$ is square),
cf.~\cite{BCN,BvM,Vanhove2011}.
Recall that $O(7, q)$ and $\Sp(6, q)$ are isomorphic if $q$ is even.
We will mix vector space dimensions with projective terminology
and call the totally isotropic $1$-spaces \textit{points},
$2$-spaces \textit{lines}, and $3$-spaces \textit{planes}.
Let $\cP$ be a finite classical polar space
with collineation group $G$ and line set $\cL$.
For subspaces $S, T$ of $\cP$, we
denote the points of $T$ which are collinear
with all points of $S$ by $T \cap S^\perp$.
Then the stabilizer of one line $L \in \cL$ has
five orbits
$\cO_{ij}$, $ij \in \{ 00, 10, 11, 20, 21\}$,
on lines which can be described as follows:
\begin{itemize}
 \item $M \in \cO_{00}$ if $L = M$,
 \item $M \in \cO_{10}$ if $\dim(L \cap M)=1$ and $\dim(L \cap M^\perp)=2$,
 \item $M \in \cO_{11}$ if $\dim(L \cap M)=1$ and $\dim(L \cap M^\perp)=1$,
 \item $M \in \cO_{20}$ if $\dim(L \cap M)=0$ and $\dim(L \cap M^\perp)=1$,
 \item $M \in \cO_{21}$ if $\dim(L \cap M)=0$ and $\dim(L \cap M^\perp)=0$.
\end{itemize}
These orbits correspond to the relations of an association scheme.
We write $R_i$ for orbit $\cO_i$, for example $R_{11}$ is the third relation
in the list above.
The eigenvalues for this association scheme can be found in~\cite{Eisfeld1999,Vanhove2011}.
Note that~\cite{Eisfeld1999} has a typo which is corrected in~\cite{Vanhove2011}.
The association scheme has common eigenspaces
$V_{00} = \< \bm{j} \>$, $V_{10}$, $V_{11}$, $V_{20}$, $V_{21}$.
Here $\bm{j}$ denotes the all-ones vector.
For a common eigenspace $V_j$, let $E_j$ be the (idempotent) orthogonal projection
onto it.
Using this ordering of the eigenspaces, the resulting
association scheme has as eigenvalue matrix
\begin{align}
 P =
 \begin{pmatrix}1 & q \left( q{+}1\right) \left( {{q}^{e}}{+}1\right) & {{q}^{e+2}} \left( q{+}1\right)       & {{q}^{e+3}} \left( q{+}1\right) \left( {{q}^{e}}{+}1\right) & {{q}^{2 e+5}} \\
               1 & {{q}^{e+1}}{+}{{q}^{2}}{+}q{-}1                   & q \left( {{q}^{e+1}}{-}1\right)       & q \left( {{q}^{e+2}}{-}{{q}^{e+1}}{-}{{q}^{e}}{-}q\right)   & -{{q}^{e+3}}  \\
               1 & -\left( {{q}^{e}}{+}1\right)                      & {{q}^{e}} \left( {{q}^{2}}{+}1\right) & -{{q}^{e+2}} \left( {{q}^{e}}{+}1\right)                    & {{q}^{2 e+2}} \\
               1 & \left( q{-}1\right)  \left( q{+}1\right)          & -q \left( q{+}1\right)                & -\left( q{-}1\right)  q \left( q{+}1\right)                 & {{q}^{3}}     \\
               1 & -\left( {{q}^{e}}{+}1\right)                      & {{q}^{e}}{-}q                         & q \left( {{q}^{e}}{+}1\right)                               & -{{q}^{e+1}}\end{pmatrix}.\label{Pmat}
\end{align}
Here the parameter $e$ for a rank $d$ polar space is defined such that
$q+1$ is the number of points on a line of the
polar space, and $q^e+1$ is the number of $d$-spaces
through a $(d-1)$-space. Hence, $e \in \{ 0, 1/2, 1, 3/2, 2 \}$.
Note that for a rank 3 polar space,
$n := |\cL| = (q^{e+1}+1)(q^{e+2}+1)(q^2+q+1)$.
The reader can find the dual eigenvalue matrix $Q$,
that is, $Q = nP^{-1}$, as well as a discussion on how to
work with it in~\ref{appendix}.

The inner distribution $a$ of a set $Y$ of lines is the length $5$-vector indexed by
the relations $R_i$, given by
\[
 a_i = |\{ (x, y) \in R_i \,|\, x, y \in Y \}|/|Y|.
\]
We will label the columns of $P$, the rows of the dual eigenvalue matrix $Q$,
and the inner distribution according to the ordering on the relations as above,
e.g.\ the column $20$ of $P$ (the fourth column) contains the eigenvalues of the graph
where two vertices are adjacent if they are in relation $R_{20}$.
Similarly, we will label the rows of $P$ and the columns or $Q$
using our given ordering of the eigenspaces.

For a subset $Y$ of $\cL$, we denote the characteristic vector
of $Y$ by $\chi_Y$.
It is well-known, for instance see Lemma~2.5.1 in~\cite{BCN},
that ${(|Y| \cdot aQ)}_j = n \chi_Y^T E_j \chi_Y$.
Hence, if ${(aQ)}_j = 0$, then $\chi_Y$ is in $V_j^\perp$.
We will need the following standard results.

\begin{Lemma}\label{lem:div_by_example}
      Let $Y, Z \subseteq \cL$.
      Then $|Y \cap Z| = |Y| \cdot |Z|/n$
      if and only if $\chi_Z \in \< \bm{j} \> + \chi_Y^\perp$.
\end{Lemma}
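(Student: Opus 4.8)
The plan is to work in the real vector space $\Rf^{\cL}$ with the standard inner product, for which $\langle \chi_Y, \chi_Z \rangle = |Y \cap Z|$, $\langle \chi_Y, \bm{j} \rangle = |Y|$, and $\langle \bm{j}, \bm{j} \rangle = n$; the statement should then fall out of splitting each characteristic vector into its component along $\bm{j}$ and its component in $\bm{j}^{\perp}$, exactly as in the $E_j$-computations above.

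First I would set $w_Y \coloneq \chi_Y - \tfrac{|Y|}{n}\bm{j}$ and $w_Z \coloneq \chi_Z - \tfrac{|Z|}{n}\bm{j}$; these lie in $\bm{j}^{\perp}$, and $\chi_Z = \tfrac{|Z|}{n}\bm{j} + w_Z$. Expanding $\langle \chi_Y, \chi_Z \rangle$ by bilinearity and using $\langle \bm{j}, \bm{j} \rangle = n$ immediately gives $|Y \cap Z| = \tfrac{|Y||Z|}{n} + \langle w_Y, w_Z \rangle$, so the asserted equality is equivalent to $w_Y \perp w_Z$. Since $w_Z \in \bm{j}^{\perp}$, one may replace $w_Y$ by $\chi_Y$ here at no cost, so the equality is also equivalent to $w_Z \perp \chi_Y$, i.e.\ to $w_Z \in \chi_Y^{\perp}$.

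The two implications are then immediate. If $|Y \cap Z| = |Y||Z|/n$, then $w_Z \in \chi_Y^{\perp}$, and since $\chi_Z = \tfrac{|Z|}{n}\bm{j} + w_Z$ this exhibits $\chi_Z \in \langle \bm{j} \rangle + \chi_Y^{\perp}$. Conversely, given $\chi_Z = c\,\bm{j} + u$ with $u \in \chi_Y^{\perp}$, I would first pair both sides with $\bm{j}$ to get $c = |Z|/n$ and hence $u = w_Z$, and then pair with $\chi_Y$ to recover $|Y \cap Z| = \tfrac{|Z|}{n}|Y| + \langle u, \chi_Y \rangle = |Y||Z|/n$.

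For a statement this elementary there is no real obstacle; the one point to be careful about is the bookkeeping of the trivial component $\bm{j}$. In particular, to make the converse pin down the scalar $c$ one reads $\chi_Y^{\perp}$ inside $\bm{j}^{\perp}$ — equivalently as $\langle \bm{j}, \chi_Y \rangle^{\perp}$, so that any $u \in \chi_Y^{\perp}$ automatically satisfies $\langle u, \bm{j} \rangle = 0$ — which is consistent with how the spaces $V_j^{\perp}$ are used above, since each $V_j$ with $j \neq 00$ lies in $\bm{j}^{\perp}$. Everything else is a single line of inner-product expansion.
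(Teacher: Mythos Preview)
Your argument is correct and is essentially identical to the paper's: both decompose $\chi_Y$ and $\chi_Z$ into their $\langle\bm{j}\rangle$- and $\langle\bm{j}\rangle^\perp$-components (your $w_Y,w_Z$ are the paper's $\psi_Y,\psi_Z$) and then observe that $\chi_Y^T\chi_Z=\tfrac{|Y||Z|}{n}+\psi_Y^T\psi_Z$. Your write-up is in fact a bit more careful than the paper's, since you make explicit that for the biconditional to hold one must read $\chi_Y^\perp$ as taken inside $\langle\bm{j}\rangle^\perp$ (equivalently, as $\langle\bm{j},\chi_Y\rangle^\perp$); the paper's proof stops at $\psi_Y^T\psi_Z=0$ and leaves this identification implicit.
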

\begin{proof}
      We write $\chi_Y = \frac{|Y|}{n} \bm{j} + \psi_Y$
      and $\chi_Z = \frac{|Z|}{n} \bm{j} + \psi_Z$,
      where $\psi_Y, \psi_Z \in \< \bm{j} \>^\perp$.
      Then $|Y \cap Z| = \chi_Y^T \chi_Z = \frac{|Y| \cdot |Z|}{n}$
      if and only if $\psi_Y^T \psi_Z = 0$.
\end{proof}

Note that the preceeding lemma also holds for weighted sets
with precisely the same proof.
Let $G$ denote the collineation group of $\cP$.
For $Y$ a family of lines of $\cP$, let denote $Y^G$
the orbit of $Y$.
As a special case of Theorem~2.5.16 of~\cite{Vanhove2011} we find

\begin{Lemma}\label{lem:span}
 Let $\chi_Y \in \< \bm{j} \> + \sum_{s \in S} V_s$
      for some $S \subseteq \{ 10, 11, 20, 21 \}$
 such that $\chi_Y \notin \< \bm{j} \> + \sum_{s \in S'} V_s$
 for any proper subset $S'$ of $S$.
 Then $\< \chi_{\tilde{Y}}: \tilde{Y} \in Y^G \> = \< \bm{j} \> + \sum_{s \in S} V_s$.
 In particular, $\chi_Z \in {\left( \sum_{s \in S} V_s \right)}^\perp$
 if and only if $|\tilde{Y} \cap Z| = |\tilde{Y}| \cdot |Z|/n$
 for all $\tilde{Y} \in Y^G$.
\end{Lemma}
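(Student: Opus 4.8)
The plan is to derive this statement from Theorem~2.5.16 of~\cite{Vanhove2011} together with a short orbit-averaging argument. First I would recall that the collineation group $G$ acts transitively on the lines $\cL$, and that this action is consistent with the association scheme: $G$ permutes the relations $R_i$ trivially (each relation is $G$-invariant), hence $G$ preserves each common eigenspace $V_j$, and the permutation representation of $G$ on $\Rf^{\cL}$ commutes with every idempotent $E_j$. The key consequence is that for any $\tilde{Y} = g(Y)$ in the orbit $Y^G$, one has $\chi_{\tilde{Y}} = g\,\chi_Y$, and therefore $E_j \chi_{\tilde{Y}} = g\,E_j\chi_Y$; in particular $\chi_{\tilde{Y}}$ lies in $\langle \bm{j}\rangle + \sum_{s\in S} V_s$ as well, and $E_s \chi_{\tilde Y} \neq 0$ for each $s \in S$ by the minimality hypothesis on $S$ (which is $G$-invariant).

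Next I would identify $W \coloneq \langle \chi_{\tilde Y} : \tilde Y \in Y^G\rangle$. By the previous paragraph $W \subseteq \langle\bm{j}\rangle + \sum_{s\in S} V_s$. For the reverse inclusion, note that $W$ is a $G$-submodule of $\Rf^{\cL}$, so it is a direct sum of some subcollection of the common eigenspaces (here one uses that the $V_j$ are exactly the irreducible, or at least the isotypic, components cut out by the scheme, and $G$ respects this decomposition — this is the content one extracts from Theorem~2.5.16 of~\cite{Vanhove2011}). Since $\bm{j} = \sum_{\tilde Y} \frac{1}{|Y^G|}\cdot(\text{suitable multiple}) $ — more precisely, averaging $\chi_{\tilde Y}$ over the orbit gives a nonzero $G$-fixed vector, hence a nonzero multiple of $\bm{j}$, so $\langle\bm j\rangle \subseteq W$. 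And for each $s \in S$, the projection $E_s W$ contains $E_s \chi_Y \neq 0$; but $E_s W$ is again a $G$-submodule of the $G$-irreducible (or isotypic) space $V_s$, and one checks it must be all of $V_s$. Hence $W = \langle\bm j\rangle + \sum_{s\in S} V_s$, which is the first assertion.

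For the second assertion, apply Lemma~\ref{lem:div_by_example}: $|\tilde Y \cap Z| = |\tilde Y|\cdot|Z|/n$ for a given $\tilde Y$ if and only if $\chi_Z \in \langle\bm j\rangle + \chi_{\tilde Y}^\perp$, equivalently $\psi_Z \perp \chi_{\tilde Y}$ where $\psi_Z$ is the component of $\chi_Z$ in $\langle\bm j\rangle^\perp$. Requiring this for \emph{all} $\tilde Y \in Y^G$ is equivalent to $\psi_Z \perp W$, i.e.\ $\psi_Z \perp \sum_{s\in S} V_s$ (the $\langle\bm j\rangle$ part is automatically orthogonal to $\psi_Z$), which is exactly $\chi_Z \in \bigl(\sum_{s\in S} V_s\bigr)^\perp$. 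The main obstacle I anticipate is the precise form in which Theorem~2.5.16 of~\cite{Vanhove2011} is stated: one needs that the orbit of a single characteristic vector spans exactly the sum of the eigenspaces on which it has nonzero projection, which requires knowing that $G$ acts irreducibly (or with known isotypic structure) on each $V_s$; if the theorem is phrased for transitive actions on a fixed subspace type, a small translation will be needed, but no genuinely new computation.
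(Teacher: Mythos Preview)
Your proposal is correct and is exactly the approach the paper takes: the paper does not give its own proof but simply records the lemma ``as a special case of Theorem~2.5.16 of~\cite{Vanhove2011},'' and your sketch unpacks precisely that citation (the $G$-invariance of the eigenspaces, the fact that the orbit span is a $G$-submodule and hence a sum of full $V_s$'s, and the appeal to Lemma~\ref{lem:div_by_example} for the intersection characterization). The potential obstacle you flag --- that one needs the $V_s$ to be the isotypic components of the permutation module --- is exactly what Vanhove's theorem supplies, using that $G$ acts generously transitively so that the Bose--Mesner algebra coincides with the centralizer algebra.
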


We will use Lemma~\ref{lem:span}
repeatedly to characterize sets in certain eigenspaces by their
intersections with other sets. For instance, in \S\ref{sec:genexs}
we will see that the set of lines in a plane lies in
$\< \bm{j} \> + V_{10} + V_{20}$. Hence, we can conclude that
if a set of lines intersects each plane in the same number of
lines, then its characteristic vector is in $\< \bm{j} \> + V_{11} + V_{21}$.\footnote{%
Note that this is all well-known and goes back to Delsarte.}

Since the eigenvalue matrix $P$ of a $d$-class association scheme is invertible,
we have that $Y$ is a regular set if and only if $\chi_{Y} \in \< \bm{j} \> + V_j$
for some eigenspace $V_j$ of the association scheme.
In fact, we can say the following:
\begin{Lemma}\label{lem:reg_set_int_numbers}
 Let $Y$ be a regular set in a $d$-class association scheme
 with $n$ vertices and eigenspaces $V_0 = \< \bm{j}\>$, $V_1$, $\ldots$, $V_d$
 such that $\chi_Y \in \< \bm{j} \> + V_j$.
 Then for any fixed vertex $x$ we have that
 \[
  |\{ y \in Y \mid (x,y) \in R_{i} \}| =
  \begin{cases}
   |Y|\frac{P_{0i}-P_{ji}}{n} + P_{ji} & \mbox{ if $x \in Y$,}    \\
   |Y|\frac{P_{0i}-P_{ji}}{n}          & \mbox{ if $x \notin Y$.}
  \end{cases}
 \]
\end{Lemma}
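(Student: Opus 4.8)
The plan is to split $\chi_Y$ into its $\bm j$-component and its $V_j$-component and then apply the adjacency matrix $A_i$ of the relation $R_i$, reading off the coordinate indexed by $x$.

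First I would fix the decomposition. Since $\chi_Y \in \< \bm j \> + V_j$ and, in a symmetric association scheme, distinct common eigenspaces are mutually orthogonal, we may write $\chi_Y = \alpha \bm j + v$ with $v \in V_j$ and $\alpha$ uniquely determined; pairing with $\bm j$ gives $|Y| = \bm j^T \chi_Y = \alpha\, \bm j^T \bm j = \alpha n$, so $\alpha = |Y|/n$ and $v = \chi_Y - \tfrac{|Y|}{n}\bm j$. Recall next that $\bm j$ spans $V_0$ and that, by definition of the eigenvalue matrix $P$, the matrix $A_i$ acts on $V_\ell$ as multiplication by $P_{\ell i}$; in particular $A_i \bm j = P_{0i}\bm j$ (this is just the valency of $R_i$) and $A_i v = P_{ji} v$. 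Hence
\[
 A_i \chi_Y = \frac{|Y|}{n}P_{0i}\bm j + P_{ji} v
            = \frac{|Y|}{n}P_{0i}\bm j + P_{ji}\Bigl(\chi_Y - \frac{|Y|}{n}\bm j\Bigr)
            = \frac{|Y|}{n}\bigl(P_{0i} - P_{ji}\bigr)\bm j + P_{ji}\chi_Y .
\]

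Finally I would take the coordinate indexed by the fixed vertex $x$. The $(x,y)$-entry of $A_i$ is $1$ precisely when $(x,y)\in R_i$, so $(A_i\chi_Y)_x = |\{\, y \in Y \mid (x,y)\in R_i \,\}|$, while $(\bm j)_x = 1$ and $(\chi_Y)_x$ equals $1$ if $x\in Y$ and $0$ otherwise. Substituting these two cases into the displayed identity gives exactly the two claimed formulas. I do not expect a genuine obstacle here; the only points that need care are the identification of the $\bm j$-component of $\chi_Y$ (which uses orthogonality of the eigenspaces, so that the coefficient is forced to be $|Y|/n$) and the observation that the $x$-th entry of $A_i\chi_Y$ counts precisely the $R_i$-neighbours of $x$ lying in $Y$.
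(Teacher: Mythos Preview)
Your argument is correct and is essentially identical to the paper's proof: the paper also writes $\chi_Y = \tfrac{|Y|}{n}\bm{j} + \psi_Y$ with $\psi_Y \in \bm{j}^\perp$, applies $A_i$ using $A_i\bm{j} = P_{0i}\bm{j}$ and $A_i\psi_Y = P_{ji}\psi_Y$, and then pairs with $\chi_{\{x\}}$, which is exactly your step of reading off the $x$-th coordinate. Your exposition is in fact slightly more explicit about why the $\bm{j}$-coefficient equals $|Y|/n$.
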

\begin{proof}
      Write $\chi_Y = \frac{|Y|}{n} \bm{j} + \psi_Y$ with $\bm{j}^T \psi_Y = 0$.
      Then \begin{align*} \chi_{\{x\}}^T A \chi_Y
      &= \chi_{\{x\}}^T \left(P_{0i} \cdot \frac{|Y|}{n} + P_{ji} \cdot \left( \chi_Y - \frac{|Y|}{n} \bm{j} \right) \right)\\
      &= |Y|\frac{P_{0i}-P_{ji}}{n} + P_{ji} \chi_{\{x\}}^T \chi_Y.\qedhere
      \end{align*}
\end{proof}

\section{General Examples}\label{sec:genexs}

Here we list several examples of subsets of lines in rank 3 polar spaces
which lie in a proper subset of the common eigenspaces
of the association scheme.
In many cases, these are either regular sets in all the graphs of the association scheme,
or at least regular with respect to some relation;
in any case, they give us important information about the regular sets through the use of Lemma~\ref{lem:div_by_example}.

\subsection{Combinatorial Designs}\label{subsec:comb_designs}

The group $G$ of a finite classical polar space $\cP$ of rank $d$
acts generously transitively on each set of totally isotropic subspaces of fixed dimension.
Writing $\Omega_{a}$ for the set of isotropic $a$-spaces,
we can consider the $G$-orbits on $\Omega_{a} \times \Omega_{b}$.
These orbits are given precisely by the sets
\[
R_{a,b}^{s,k} = \{ (\pi_{a}, \pi_{b}) \mid \dim(\pi_{a} \cap \pi_{b}) = s,\ \dim(\< \pi_{a}, \pi_{b}\cap\pi_{a}^{\perp}\>) = k\},
\]
with $0 \leq s \leq \min{(a,b)}$, $\max{(a,b)} \leq k \leq \min{(d, a+b-s)}$.
Following~\cite{Ito2004}, we say a set $Y \subset \Omega_{a}$ is
a \textit{combinatorial design with respect to the $b$-spaces of $\cP$}
if, for each such orbit $\cO$, we have that
$|\{\pi_{a} \in Y \mid (\pi_{a}, \pi_{b}) \in \cO\}|$ is constant for all $\pi_{b} \in \Omega_{b}$.

Therefore if $Y$ is a combinatorial design with respect to $b$-spaces of $\cP$,
then every $b$-space is incident with a constant number of elements of $Y$
(we just apply the definition to the relation $R_{a,b}^{a,b}$, respectively $R_{a,b}^{b,a}$, according to whether $a \leq b$ or $a > b$).
This property is in fact enough to characterize combinatorial designs in finite polar spaces;
we give the following theorem specialized only to our context of sets of lines in rank 3 polar spaces to avoid giving needless definitions, but a more general version can be found in~\cite{Vanhove2011}.
Also note that Schmidt's recent research program on combinatorial designs in polar spaces resulted
in a flurry of activities on combinatorial designs in polar spaces~\cite{DIS2024,Kiermaier2024,Weiss2023}.

\begin{Theorem}\label{thm:cdpoints}\label{cor:char_const_point}
  In a rank $3$ polar space with parameter $e$,
  a set $Y$ of lines is a combinatorial design with respect to points
  if and only if every point of $\cP$ lies on precisely $m$ elements of $S$,
  which occurs if and only if $\chi_{Y} \in \< \bm{j} \> + V_{20} + V_{21}$.
  In this case we have that
  \[
  |Y| = m\frac{(q^{e+2}+1)(q^2 +q +1)}{(q+1)}.
  \]
\end{Theorem}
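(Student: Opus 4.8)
The plan is to encode both conditions through incidence matrices and to read the relevant eigenspaces off the matrix $P$ in~\eqref{Pmat} directly. Let $N$ be the point--line incidence matrix of $\cP$ (rows indexed by points, columns by lines, $N_{P,L}=1$ exactly when $P\in L$). Two distinct lines meeting in a point share exactly one point, and two lines meeting in the zero space share none, so $N^{T}N=(q{+}1)I+A_{10}+A_{11}$, where $A_{ij}$ is the adjacency matrix of $R_{ij}$. Since the column space of $N^{T}$ equals that of $N^{T}N$, and $N^{T}N$ is a real combination of the adjacency matrices of the scheme, the row space of $N$ is the sum of those common eigenspaces $V_{j}$ on which $(q{+}1)+P_{j,10}+P_{j,11}\neq 0$; substituting the second and third columns of $P$, this quantity equals $(q{+}1)^{2}(q^{e+1}{+}1)$, $q(q{+}1)(q^{e}{+}1)$, $q(q^{e+1}{+}1)$, $0$, $0$ for $j=00,10,11,20,21$. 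Hence the row space of $N$ -- equivalently, the span of the characteristic vectors of the pencils of lines through the points of $\cP$ -- is exactly $\<\bm j\>+V_{10}+V_{11}$. (This is the instance of Lemma~\ref{lem:span} the argument needs, here obtained from a one-line computation in $P$.)

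I would then run the point-degree equivalence. Write $\psi_{Y}=\chi_{Y}-\tfrac{|Y|}{n}\bm j\in\<\bm j\>^{\perp}$. Every point of $\cP$ lies on the same number of lines and every line has $q{+}1$ points, so $N\bm j$ is a constant vector and $N^{T}\bm j=(q{+}1)\bm j$; hence $N\psi_{Y}$ is orthogonal to $\bm j$, and $N\chi_{Y}$ (whose coordinates record the number of lines of $Y$ on each point) is a constant vector if and only if $N\psi_{Y}=0$, i.e.\ if and only if $\psi_{Y}$ lies in the orthogonal complement of the row space of $N$, which is $V_{20}+V_{21}$. This is exactly the equivalence ``every point of $\cP$ lies on precisely $m$ elements of $Y$'' $\Longleftrightarrow\chi_{Y}\in\<\bm j\>+V_{20}+V_{21}$. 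Averaging the point degree (or applying Lemma~\ref{lem:span}) gives the common value $m=|Y|\,(q{+}1)(q^{e+1}{+}1)/n$, since the residue of a point is a generalized quadrangle of order $(q,q^{e})$ with $(q{+}1)(q^{e+1}{+}1)$ points; using $n=(q^{e+1}{+}1)(q^{e+2}{+}1)(q^{2}{+}q{+}1)$, this rearranges to $|Y|=m(q^{e+2}{+}1)(q^{2}{+}q{+}1)/(q{+}1)$.

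It remains to upgrade ``constant point degree'' to ``combinatorial design with respect to points'', the reverse implication being the trivial special case noted before the theorem. For a point $P$ the lines of $\cP$ split into the three orbits $R_{1,2}^{1,2}$ (lines through $P$), $R_{1,2}^{0,3}$ (lines of $P^{\perp}$ not through $P$) and $R_{1,2}^{0,2}$ (the rest), so the three corresponding counts for $Y$ sum to $|Y|$ at each $P$, and it suffices to make one further count constant. Let $M$ be the point--line matrix with $M_{P,L}=1$ exactly when $L\subseteq P^{\perp}$ and $P\notin L$, and let $C$ be the collinearity matrix of the points ($C_{P,Q}=1$ iff $Q\in P^{\perp}$). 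The number of points of a line $L$ lying in $P^{\perp}$ is $q{+}1$ when $L\subseteq P^{\perp}$ and $1$ otherwise, which reads $CN=J+qM'$ with $J$ the all-ones point--line matrix and $M'_{P,L}=1$ iff $L\subseteq P^{\perp}$; since $P\in L$ forces $L\subseteq P^{\perp}$ we get $M=M'-N=\tfrac1q(C-qI)N-\tfrac1q J$. Thus each row of $M$ is a linear combination of pencil vectors and of $\bm j$, so the row space of $M$ lies in $\<\bm j\>+V_{10}+V_{11}$; consequently, if $\chi_{Y}\in\<\bm j\>+V_{20}+V_{21}$ then $M\psi_{Y}=0$, so $M\chi_{Y}=\tfrac{|Y|}{n}M\bm j$ is constant, the $R_{1,2}^{0,3}$ count is constant, and therefore all three orbit counts are constant.

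The only polar-space-specific ingredients are the two short counts $N^{T}N=(q{+}1)I+A_{10}+A_{11}$ and $CN=J+qM'$; the rest is linear algebra with $P$. The step needing the most care is verifying that the row space of $N$ is \emph{exactly} $\<\bm j\>+V_{10}+V_{11}$ -- that $(q{+}1)+P_{j,10}+P_{j,11}$ vanishes for $j\in\{20,21\}$ but not for $j\in\{00,10,11\}$ -- since the first implication genuinely uses this equality rather than just an inclusion; one should also check that the boundary value $e=0$ (where $q^{e}{+}1=2$) causes no degeneracy, which it does not.
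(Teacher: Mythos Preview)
Your argument is correct. The paper itself does not prove this theorem; it states it (together with Theorem~\ref{thm:cdplanes}) as a specialization of a general result in~\cite{Vanhove2011} on combinatorial designs in polar spaces, so there is no in-paper proof to compare against directly. Your route is therefore genuinely more explicit: you identify the row space of the point--line incidence matrix $N$ with $\langle\bm j\rangle+V_{10}+V_{11}$ by computing the eigenvalues of $N^{T}N=(q{+}1)I+A_{10}+A_{11}$ straight from the columns of~\eqref{Pmat}, and you handle the second orbit count via the identity $CN=J+qM'$, which expresses the rows of $M$ as combinations of pencil vectors and $\bm j$. In effect this repackages, in matrix language, precisely what the paper later verifies piecemeal in \S\ref{subsec:ptpen} (that both the pencil through $P$ and the set $Y'$ of lines in $P^{\perp}$ off $P$ lie in $\langle\bm j\rangle+V_{10}+V_{11}$, with the nonzero $aQ$-entries needed to invoke Lemma~\ref{lem:span}); the gain is a fully self-contained argument that does not import the general design-theoretic machinery from~\cite{Vanhove2011}.
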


\begin{Theorem}\label{thm:cdplanes}
  In a rank $3$ polar space with parameter $e$,
  a set $Y$ of lines is a combinatorial design with respect to generators
  if and only if every generator of $\cP$ contains precisely $m$ elements of $S$,
  which occurs if and only if $\chi_{Y} \in \< \bm{j} \> + V_{11} + V_{21}$. In this case we have that
  \[
  |Y| = m(q^{e+1} +1)(q^{e+2}+1).
  \]
\end{Theorem}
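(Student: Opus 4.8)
The plan is to prove the cycle of implications ($Y$ is a combinatorial design with respect to generators) $\Rightarrow$ (every generator contains exactly $m$ lines of $Y$) $\Rightarrow$ ($\chi_Y\in\langle\bm{j}\rangle+V_{11}+V_{21}$) $\Rightarrow$ ($Y$ is a combinatorial design with respect to generators), and then to read off the size. The first implication is immediate: ``$L$ is contained in $\pi$'' is exactly the line--plane orbit $R_{2,3}^{2,3}$, so a combinatorial design in particular meets every generator in a constant number $m$ of lines.

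For the middle equivalence I would use Lemma~\ref{lem:span}. Write $L_\pi$ for the set of lines contained in a plane $\pi$. The key input is that $\chi_{L_\pi}\in\langle\bm{j}\rangle+V_{10}+V_{20}$ with both the $V_{10}$- and the $V_{20}$-component nonzero. To see this I would compute the inner distribution of $L_\pi$: a generator is totally isotropic, so $\pi\subseteq\pi^\perp$, hence any two distinct lines $L,M\subseteq\pi$ satisfy $\dim(L\cap M)=1$ and $L\subseteq M^\perp$, i.e.\ $(L,M)\in\cO_{10}$; since $|L_\pi|=q^2+q+1$ the inner distribution is $a=(1,\,q^2+q,\,0,\,0,\,0)$. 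Then $(|L_\pi|\cdot aQ)_j=n\,\chi_{L_\pi}^T E_j\chi_{L_\pi}=n\,\|E_j\chi_{L_\pi}\|^2$, and a direct evaluation of $aQ$ with the matrix $Q$ of~\ref{appendix} should show that this is zero exactly for $j\in\{11,21\}$. Granting this, Lemma~\ref{lem:span} applied with $S=\{10,20\}$ gives that the $G$-orbit of $L_\pi$ spans $\langle\bm{j}\rangle+V_{10}+V_{20}$, and its ``in particular'' clause says precisely that $\chi_Y\in(V_{10}+V_{20})^\perp=\langle\bm{j}\rangle+V_{11}+V_{21}$ if and only if $|Y\cap L_{\tilde\pi}|=|Y|\,|L_{\tilde\pi}|/n$ for every plane $\tilde\pi$, i.e.\ every generator contains the same number of lines of $Y$. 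The converse does not even need the nonvanishing: if $\chi_Y\in\langle\bm{j}\rangle+V_{11}+V_{21}$, then $V_{10}+V_{20}$ and $V_{11}+V_{21}$ being orthogonal we get $\chi_{L_\pi}\in\langle\bm{j}\rangle+\chi_Y^\perp$, so Lemma~\ref{lem:div_by_example} yields $|Y\cap L_\pi|=|Y|\,|L_\pi|/n$ for all $\pi$.

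To close the cycle I need (every generator contains $m$ lines of $Y$) $\Rightarrow$ ($Y$ is a combinatorial design with respect to generators); this is the specialization to our setting of the characterization in~\cite{Vanhove2011}. Alternatively one can argue directly: in a rank $3$ polar space there are only three line--plane orbits, namely $L\subseteq\pi$, $\dim(L\cap\pi)=1$, and $L\cap\pi=0$ (the admissibility bound $\max(a,b)\le k\le\min(d,a+b-s)$ forces $k=3$ in the latter two cases, because a totally isotropic space spanned by $L$ together with a subspace of $\pi$ orthogonal to $L$ has dimension at most $3$), so that $\chi_{L_\pi}+\chi_{\cL_{\mathrm{pt}}(\pi)}+\chi_{\cL_{\mathrm{disj}}(\pi)}=\bm{j}$; it then suffices to check that one of $\chi_{\cL_{\mathrm{pt}}(\pi)}$, $\chi_{\cL_{\mathrm{disj}}(\pi)}$ also lies in $\langle\bm{j}\rangle+V_{10}+V_{20}$, which is again an inner-distribution computation. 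Finally, in any of the equivalent situations the common number of lines of $Y$ in a generator equals $m=|Y|\,|L_\pi|/n=|Y|(q^2+q+1)/n$, which together with $n=(q^{e+1}+1)(q^{e+2}+1)(q^2+q+1)$ rearranges to $|Y|=m(q^{e+1}+1)(q^{e+2}+1)$.

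The main obstacle is the nonvanishing of both the $V_{10}$- and the $V_{20}$-component of $\chi_{L_\pi}$: without it Lemma~\ref{lem:span} only confines $\chi_Y$ to a possibly larger sum of eigenspaces, and the ``only if'' direction of the eigenspace characterization fails. Pinning this down is a finite computation with $P$ and $Q$, but it is the one genuinely non-formal step; and if one wants the implication (every generator meets $Y$ in $m$ lines) $\Rightarrow$ ($Y$ is a combinatorial design) without quoting~\cite{Vanhove2011}, that costs a second computation of the same kind.
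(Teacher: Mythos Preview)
Your argument is correct, and in fact more detailed than what the paper does: the paper does not prove Theorem~\ref{thm:cdplanes} at all, it simply cites the general characterization from~\cite{Vanhove2011}. The ingredients you identify are, however, all present elsewhere in the paper. The inner distribution $a=(1,q^2+q,0,0,0)$ of $L_\pi$ and the vanishing $(aQ)_{11}=(aQ)_{21}=0$ are computed in \S\ref{subsec:planes}, and the explicit $aQ$ in the appendix shows that the $V_{10}$- and $V_{20}$-components are both nonzero, resolving what you call the ``main obstacle''. The implication you extract from Lemma~\ref{lem:span} is exactly the example the paper gives in the remark immediately following that lemma.

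The one place where you genuinely go beyond the paper is the step back from the eigenspace condition to ``combinatorial design with respect to generators''. Your orbit count is right: for a line $L$ disjoint from a generator $\pi$ one cannot have $\dim(\pi\cap L^\perp)=2$, since then $\langle L,\pi\cap L^\perp\rangle$ would be totally isotropic of dimension~$4$ in a rank~$3$ space; hence there are exactly three line--plane orbits, and it suffices that one of the two remaining orbit indicators also lie in $\langle\bm{j}\rangle+V_{10}+V_{20}$. That is indeed a second inner-distribution computation of the same type (or, equivalently, a computation of $NN^T$ for the relevant incidence matrix $N$), and the paper avoids it entirely by deferring to~\cite{Vanhove2011}. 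Either route closes the cycle; yours is more self-contained, the paper's is shorter.
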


\subsection{Planes}\label{subsec:planes}

Let $Y$ be the set of lines in a plane.
Then its inner distribution is
\[
 a = (1, q^2+q, 0, 0, 0).
\]
We find that ${(aQ)}_{11} = {(aQ)}_{21} = 0$,
that is $\chi_Y \in \< \bm{j} \> + V_{10} + V_{20}$.
By Lemma~\ref{lem:div_by_example}, we find

\begin{Lemma}\label{lem:plane_div}
 Let $Z$ a family of lines with $\chi_Z \in {(V_{10} + V_{20})}^\perp$.
 Then $|Z| = m(q^{e+1}+1)(q^{e+2}+1)$ for some integer $m$.
\end{Lemma}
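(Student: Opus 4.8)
The plan is to instantiate Lemma~\ref{lem:div_by_example} with $Z$ together with the set $Y = \{ L \in \cL : L \subseteq \pi \}$ of lines contained in a fixed plane $\pi$ of $\cP$. From the inner distribution $a = (1, q^2+q, 0, 0, 0)$ just recorded we have $|Y| = q^2 + q + 1$ and $\chi_Y \in \<\bm{j}\> + V_{10} + V_{20}$, which is exactly the input we need.

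First I would confirm the hypothesis $\chi_Z \in \<\bm{j}\> + \chi_Y^{\perp}$ of Lemma~\ref{lem:div_by_example}. Decomposing $\chi_Z = \tfrac{|Z|}{n}\bm{j} + \psi_Z$ with $\psi_Z \in \<\bm{j}\>^{\perp}$, and noting that both $\bm{j} \in V_{00}$ and (by assumption) $\chi_Z$ are orthogonal to $V_{10} + V_{20}$, we get $\psi_Z \in (V_{10}+V_{20})^{\perp}$. Since $\chi_Y - \tfrac{|Y|}{n}\bm{j} \in V_{10} + V_{20}$ and $\psi_Z \perp \bm{j}$, this forces $\psi_Z \perp \chi_Y$, i.e.\ $\chi_Z \in \<\bm{j}\> + \chi_Y^{\perp}$. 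Lemma~\ref{lem:div_by_example} then gives, using $n = (q^{e+1}+1)(q^{e+2}+1)(q^2+q+1)$,
\[
 |Y \cap Z| = \frac{|Y|\,|Z|}{n} = \frac{|Z|}{(q^{e+1}+1)(q^{e+2}+1)} .
\]
The left-hand side is the number of lines of $\cP$ lying in $Z$ and in the plane $\pi$, hence a non-negative integer, so $(q^{e+1}+1)(q^{e+2}+1) \mid |Z|$ and $m := |Z| / \bigl( (q^{e+1}+1)(q^{e+2}+1) \bigr)$ is a non-negative integer, which is the claim.

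I do not expect any genuine obstacle here: the statement is just the divisibility shadow of the containment $\chi_Y \in \<\bm{j}\> + V_{10} + V_{20}$ for planes, fed through Lemma~\ref{lem:div_by_example}. The only subtlety worth flagging is that the integrality step really does use that $Z$ is an (unweighted) set of lines; for a weighted $Z$ the displayed equality persists, but the divisibility conclusion need not hold.
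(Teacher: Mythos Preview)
Your proof is correct and is exactly the argument the paper has in mind: it applies Lemma~\ref{lem:div_by_example} with $Y$ the set of lines in a fixed plane, using $\chi_Y \in \<\bm{j}\> + V_{10} + V_{20}$ and $|Y| = q^2+q+1$ to obtain $|Y \cap Z| = |Z|/\bigl((q^{e+1}+1)(q^{e+2}+1)\bigr)$, and then reads off the divisibility from integrality of $|Y \cap Z|$. The paper records only ``By Lemma~\ref{lem:div_by_example}, we find'' and leaves the rest implicit, so your write-up is a faithful expansion of the intended one-line proof.
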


%

\subsection{Point-Pencils}\label{subsec:ptpen}

Let $Y$ be a point-pencil, that is
the set of lines through a fixed point $P$.
Then its inner distribution is
\[
 a = (1, q^{e+1}+q, q^{e+2}, 0, 0).
\]
We find that ${(aQ)}_{20} = {(aQ)}_{21} = 0$,
that is $\chi_Y \in \< \bm{j} \> + V_{10} + V_{11}$.
By Lemma~\ref{lem:div_by_example}, we find

\begin{Lemma}\label{lem:pointpencil_div}
 Let $Z$ a family of lines with $\chi_Z \in {(V_{10} + V_{11})}^\perp$.
 Then for some integer $m$, we find
 \begin{enumerate}[(a)]
  \item If $e \neq 1$ and $q$ even, then $|Z| = m(q^2+q+1)(q^{e+2}+1)$.
  \item If $e \neq 1$ and $q$ odd, then $|Z| = m(q^2+q+1)(q^{e+2}+1)/2$.
  \item If $e = 1$, then $|Z| = m(q^4+q^2+1)$.
 \end{enumerate}
\end{Lemma}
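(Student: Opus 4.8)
The plan is to feed the stated eigenspace condition on $\chi_Z$ into Lemma~\ref{lem:div_by_example}, using the point-pencil $Y$ itself as the reference set, and then to extract the divisibility of $|Z|$ by an elementary gcd computation.

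First I would read off $|Y|$ from the inner distribution $a = (1, q^{e+1}+q, q^{e+2}, 0, 0)$ of a point-pencil: summing the entries gives $|Y| = (q+1)(q^{e+1}+1)$. Writing the orthogonal decompositions $\chi_Z = \tfrac{|Z|}{n}\bm{j} + \psi_Z$ with $\psi_Z \in V_{20}+V_{21}$ (this is exactly what $\chi_Z \in (V_{10}+V_{11})^{\perp}$ means) and $\chi_Y = \tfrac{|Y|}{n}\bm{j} + \psi_Y$ with $\psi_Y \in V_{10}+V_{11}$, the vectors $\psi_Z$ and $\psi_Y$ lie in mutually orthogonal common eigenspaces, so $\psi_Z^{T}\psi_Y = 0$; since also $\psi_Z^{T}\bm{j}=0$, we get $\psi_Z^{T}\chi_Y = 0$, i.e.\ $\chi_Z \in \langle\bm{j}\rangle + \chi_Y^{\perp}$. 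Lemma~\ref{lem:div_by_example} then yields
\[
 |Y \cap Z| \;=\; \frac{|Y|\,|Z|}{n} \;=\; \frac{(q+1)\,|Z|}{(q^{e+2}+1)(q^2+q+1)},
\]
using $n = (q^{e+1}+1)(q^{e+2}+1)(q^2+q+1)$. As the left-hand side is a non-negative integer, $N := (q^{e+2}+1)(q^2+q+1)$ divides $(q+1)|Z|$.

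It remains to convert ``$N \mid (q+1)|Z|$'' into the three asserted formulas. Put $g = \gcd(N, q+1)$; the elementary identity $\gcd(N/g,(q+1)/g) = 1$ gives $N/g \mid |Z|$, so it suffices to compute $g$. Since $q^2+q+1 \equiv 1 \pmod{q+1}$ we have $\gcd(q+1, q^2+q+1) = 1$, whence $g = \gcd(q+1, q^{e+2}+1)$. A short computation (reduce modulo $q+1$; and when $q = r^2$ is a square with $e$ a half-integer, so that $q^{e+2}+1 = r^{2e+4}+1$ with $2e+4$ odd, also reduce modulo $r\pm 1$) shows this gcd is $q+1$ when $e = 1$ (indeed $q^{e+2}+1 = q^3+1 = (q+1)(q^2-q+1)$), is $1$ when $e \ne 1$ and $q$ is even, and is $2$ when $e \ne 1$ and $q$ is odd (note $q^{e+2}+1$ is even in this last case, so $N/2\in\mathbb{Z}$). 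Substituting these values of $g$ into $N/g \mid |Z|$ — and using $(q^2-q+1)(q^2+q+1) = q^4+q^2+1$ in the case $e=1$ — gives exactly (a), (b), (c).

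The main obstacle is the bookkeeping in the last step: one must not split $N$ into the factors $q^{e+2}+1$ and $q^2+q+1$ and treat them separately, since $\gcd(q^{e+2}+1,\,q^2+q+1)$ need not be $1$; keeping $N$ intact and cancelling only $\gcd(N,q+1)$ is what makes the constant come out as stated. The remaining work — evaluating $\gcd(q+1,\,q^{e+2}+1)$ for the finitely many values $e\in\{0,\tfrac12,1,\tfrac32,2\}$ and both parities of $q$ — is routine.
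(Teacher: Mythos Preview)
Your argument is correct and is exactly the route the paper intends: the paper simply says ``By Lemma~\ref{lem:div_by_example}'' before stating the result, and you have filled in precisely those details --- apply Lemma~\ref{lem:div_by_example} with $Y$ a point-pencil to obtain $(q+1)|Z| \equiv 0 \pmod{(q^{2}+q+1)(q^{e+2}+1)}$, then compute $\gcd(q+1,q^{e+2}+1)$ case by case. Your gcd analysis (including the half-integer cases via $q=r^{2}$) is accurate, and your warning about not splitting $N$ into its two factors prematurely is a sensible remark.
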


%

We can also consider the family $Y'$
of lines in $P^\perp$ which are not incident with $P$.
Then its inner distribution is
\[
 a' = (1, q^2-1, q^{e+1}(q+1), (q^2-1)q^{e+1}, q^{2e+3}).
\]
Again, we find that ${(a'Q)}_{20} = {(a'Q)}_{21} = 0$,
that is $\chi_{Y'} \in \< \bm{j} \> + V_{10} + V_{11}$.

Put $\chi_{\tilde{Y}} =(q^e+1)\chi_Y + \chi_{Y'}$, so that
$\chi_{\tilde{Y}} \in \< \bm{j} \> + V_{10} + V_{11}$.
Then we also have that
\[
 \chi_{\tilde{Y}} = \sum_{\Pi \ni P} \chi_\Pi,
\]
where $\Pi$ is a plane on $P$ and $\chi_\Pi$
is the characteristic vector of all lines in $\Pi$
as in~\S\ref{subsec:planes}, giving us
$\chi_{\tilde{Y}} \in \< \bm{j}\> + V_{10} + V_{20}$.
Hence, we actually have that
$\chi_{\tilde{Y}} \in \< \bm{j}\> + V_{10}$.
This allows us to show the following for line sets in $V_{10}^{\perp}$:

\begin{Lemma}\label{lem:pointpencil_div2}
 Let $Z$ a family of lines with $\chi_Z \in V_{10}^\perp$
 such that at least one point $P$ of the polar space is not covered by $Z$.
 Then for some integer $m$, we find
 \begin{enumerate}[(a)]
  \item If $e \in \{ 0, 2 \}$ and $q$ even, then $|Z| = m(q^{e+2}+1)$.
  \item If $e \in \{ 0, 2 \}$ and $q$ odd, then $|Z| = m(q^{e+2}+1)/2$.
  \item If $e \in \{ \frac{1}{2}, \frac{3}{2}\}$, then $|Z| = m\frac{q^{e+2}+1}{q^e+1}$.
  \item If $e = 1$, then $|Z| = m(q^2-q+1)$.
 \end{enumerate}
\end{Lemma}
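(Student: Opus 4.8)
The plan is to pair $\chi_Z$ against the weighted line set $\tilde Y$ built just above the statement, for which $\chi_{\tilde Y} = (q^e+1)\chi_Y + \chi_{Y'}$ lies in $\langle\bm j\rangle + V_{10}$. First I would read off the two sizes I need from the inner distributions already computed: summing the entries of $a$ and $a'$ gives $|Y| = (q+1)(q^{e+1}+1)$ for the point-pencil and $|Y'| = q^2(q^e+1)(q^{e+1}+1)$ for the lines of $P^\perp$ off $P$, so the total weight of $\tilde Y$ is
\[
 \bm j^T \chi_{\tilde Y} = (q^e+1)(q+1)(q^{e+1}+1) + q^2(q^e+1)(q^{e+1}+1) = (q^e+1)(q^{e+1}+1)(q^2+q+1).
\]
Dividing by $n = (q^{e+1}+1)(q^{e+2}+1)(q^2+q+1)$ shows that the $\bm j$-component of $\chi_{\tilde Y}$ is $c\,\bm j$ with $c = (q^e+1)/(q^{e+2}+1)$, so $\chi_{\tilde Y} - c\,\bm j \in V_{10}$.

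Next, since $\chi_Z \in V_{10}^\perp$, pairing with $\chi_{\tilde Y} - c\,\bm j$ kills that term and gives
\[
 \chi_{\tilde Y}^T \chi_Z = c\,\bm j^T \chi_Z = \frac{q^e+1}{q^{e+2}+1}\,|Z|
\]
(this is just the weighted form of Lemma~\ref{lem:div_by_example} applied to $\tilde Y$ and $Z$). On the other hand every entry of $\chi_{\tilde Y}$ is a non-negative integer: a line through $P$ has weight $q^e+1$, a line of $P^\perp$ not through $P$ has weight $1$, and every other line has weight $0$. Hence $\chi_{\tilde Y}^T\chi_Z$ is a non-negative integer, and by the hypothesis that no line of $Z$ passes through $P$ it equals exactly the number $m$ of lines of $Z$ contained in $P^\perp$. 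Therefore $|Z| = m\,(q^{e+2}+1)/(q^e+1)$, which is part (c) and, more importantly, the common core of all four cases.

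Finally I would specialize $e$ and clear the common factor. For $e=1$ this is immediate since $(q^3+1)/(q+1) = q^2-q+1$, giving (d). For $e\in\{0,2\}$ one has $\gcd(q^e+1,\,q^{e+2}+1) = \gcd(q^e+1,\,q^2-1)$, which is $1$ when $q$ is even and $2$ when $q$ is odd: an odd prime dividing $q^2-1$ satisfies $q\equiv\pm1$, hence $q^e\equiv1$ as $e$ is even, so it cannot divide $q^e+1$; and for $q$ odd, $q^e$ and $q^{e+2}$ are odd squares, so $q^e+1\equiv q^{e+2}+1\equiv2\pmod 8$, i.e.\ $v_2(q^e+1)=v_2(q^{e+2}+1)=1$. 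Removing this common factor from the identity $(q^e+1)|Z| = m\,(q^{e+2}+1)$ and using coprimality then gives $(q^{e+2}+1)\mid |Z|$ for $q$ even (part (a)) and $\tfrac12(q^{e+2}+1)\mid |Z|$ for $q$ odd (part (b)).

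The only mildly delicate point is this last $\gcd$ / $2$-adic bookkeeping needed to pin down the exact divisor in (a)–(b); everything else reduces to computing two sizes from the inner distributions and invoking a single orthogonality relation.
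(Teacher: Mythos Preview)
Your proposal is correct and follows the same route as the paper: compute the total weight of $\tilde Y$, use $\chi_{\tilde Y}\in\langle\bm j\rangle+V_{10}$ together with $\chi_Z\in V_{10}^\perp$ to get $\chi_{\tilde Y}^T\chi_Z=|Z|\,(q^e+1)/(q^{e+2}+1)$, and observe that the left side is an integer. The paper's proof is two lines and leaves the case-by-case $\gcd$ analysis for (a)--(b) implicit; you have spelled it out carefully, which is fine.
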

\begin{proof}
 Assume that $P$ is as before.
 We find $|\chi_{\tilde{Y}}| = (q^e+1)(q^{e+1}+1)(q^2+q+1)$
 and, as $P$ is not on a line of $Z$, by Lemma~\ref{lem:div_by_example},
 \[
  |\tilde{Y} \cap Z| = |\tilde{Y}| \cdot \frac{(q^e+1)}{q^{e+2}+1}. \qedhere
 \]
\end{proof}


\subsection{1-Systems}\label{subsec:systems}

A $1$-system of a rank $d$ polar space
is a set of lines of size $q^{d+e-1}+1$
which are pairwise opposite. Hence,
in our case it is a set $Y$ of $q^{e+2}+1$ lines
with inner distribution
\[
 a = (1, 0, 0, 0, q^{e+2}).
\]
We find that ${(aQ)}_{10} = 0$, that is $\chi_Y \in V_{10}^\perp$.

Note that $1$-systems are rarely known, cf.~\cite{DBKM2011,HM1997,ST1994}.
One interesting example in our context is a line spread
of $O^-(6, q)$ which is a $1$-system of $O(7, q)$.


\subsection{EKR Sets of Pairwise Opposite Lines}\label{subsec:ekr}

We will see in Proposition~\ref{prop:nothing_in_V21}\footnote{%
While the proof of Proposition~\ref{prop:nothing_in_V21} uses results
of this section, it does not rely on this subsection.}
that the smallest
possible non-empty family $Y$ of lines with $\chi_Y \in \< \bm{j}\> + V_{10}$
has size $(q^{e+1}+1)(q^2+q+1)$ and, by Lemma~\ref{lem:reg_set_int_numbers}, inner distribution
\[
 a = (1, q^{e+1} + q^2+q, q^{e+2}, q^{e+3}, 0).
\]
Hence, as the relation $R_{21}$ does not occur,
such sets $Y$ correspond to EKR sets in buildings as
defined in~\cite{IMM2018}. It is shown in~\cite{Metsch2019}
that a set of pairwise opposite lines
in a rank $3$ building has size at most
$(q^{e+1}+1)(q+1) + q^2(q^e+1)$.
We obtain the following result.

\begin{Lemma}\label{lem:noekrratio}
 There exists no family $Y$ of lines with $\chi_Y \in \< \bm{j} \> + V_{10}$
 and size $(q^{e+1}+1)(q^2+q+1)$.
\end{Lemma}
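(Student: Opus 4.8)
The plan is to argue by contradiction: assume such a family $Y$ exists, show that no two of its lines lie in the relation $R_{21}$, so that $Y$ is an EKR set of lines of the rank $3$ building in the sense of~\cite{IMM2018}, and then observe that the size $(q^{e+1}+1)(q^2+q+1)$ strictly exceeds the upper bound for such sets established in~\cite{Metsch2019}.

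First I would use that $\chi_Y \in \langle \bm{j}\rangle + V_{10}$ makes $Y$ a regular set, so Lemma~\ref{lem:reg_set_int_numbers} applies with $j = 10$. Since $n = (q^{e+1}+1)(q^{e+2}+1)(q^2+q+1)$ we have $|Y|/n = 1/(q^{e+2}+1)$, and reading the entries $P_{0,21} = q^{2e+5}$ and $P_{10,21} = -q^{e+3}$ off of~\eqref{Pmat} gives
\[
 a_{21} = |Y|\frac{P_{0,21} - P_{10,21}}{n} + P_{10,21} = \frac{q^{2e+5} + q^{e+3}}{q^{e+2}+1} - q^{e+3} = \frac{q^{e+3}(q^{e+2}+1)}{q^{e+2}+1} - q^{e+3} = 0.
\]
(Indeed, the full inner distribution comes out as $a = (1, q^{e+1}+q^2+q, q^{e+2}, q^{e+3}, 0)$, as recorded above; only the last entry matters here.) Hence the relation $R_{21}$ does not occur among pairs of lines of $Y$, so $Y$ is an EKR set of the building, and by~\cite{Metsch2019} we get $|Y| \le (q^{e+1}+1)(q+1) + q^2(q^e+1)$.

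Finally I would compare the two quantities. Expanding,
\[
 (q^{e+1}+1)(q^2+q+1) - \bigl( (q^{e+1}+1)(q+1) + q^2(q^e+1) \bigr) = q^{e+3} - q^{e+2} = q^{e+2}(q-1),
\]
which is strictly positive since $q \ge 2$ (and $q \ge 4$ when $e$ is half-integral). Thus the claimed size $|Y| = (q^{e+1}+1)(q^2+q+1)$ violates the bound of~\cite{Metsch2019}, a contradiction, and no such $Y$ exists. There is no genuine obstacle in this argument; the one step that needs care is reading off the correct eigenvalue entries of $P$ (recall the typo in~\cite{Eisfeld1999} noted above) so that $a_{21}=0$, which is exactly what licenses the appeal to the EKR bound. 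Note also that the minimality of $(q^{e+1}+1)(q^2+q+1)$ among sizes of nonempty members of $\langle\bm{j}\rangle + V_{10}$ is not used and is irrelevant for this lemma.
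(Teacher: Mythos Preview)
Your proposal is correct and follows essentially the same approach as the paper: the paper's argument (given in the text immediately preceding the lemma rather than in a separate proof environment) is precisely to use Lemma~\ref{lem:reg_set_int_numbers} to obtain the inner distribution $(1, q^{e+1}+q^2+q, q^{e+2}, q^{e+3}, 0)$, observe that $a_{21}=0$ so $Y$ is an EKR set of the building, and then invoke the Metsch bound $(q^{e+1}+1)(q+1)+q^2(q^e+1)$, which is strictly smaller. Your explicit verification that the difference equals $q^{e+2}(q-1)>0$ is a detail the paper leaves implicit.
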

%

\subsection{Embedded Rank 3 Polar Spaces}\label{subsec:subrank3}

If $\cP \in \{ O(7, q), U(7, q), O^-(8, q) \}$
(or $\cP = \Sp(6, q)$ and $q$ even),
then we find a polar space $\cP'$ of the same rank
with parameter $e-1$ in $\cP$ (namely,
$O^+(6, q), U(6, q)$, or $O(7, q)$). Let $Y$
denote the line set of $\cP'$. Then  its inner distribution is
\[
 a = (1 , q(q+1) ( {{q}^{e-1}}+1)  , {{q}^{e+1}} ( q+1)  , {{q}^{e+2}} ( q+1) ( {{q}^{e-1}}+1)  , {{q}^{2 e+3}}).
\]
We find that ${(aQ)}_{11} = {(aQ)}_{20} = {(aQ)}_{21} = 0$,
so $\chi_Y \in \< \bm{j}\> + V_{10}$.

\subsection{Generalized Quadrangles}\label{subsec:gq}

If $\cP \in \{ O^+(6, q), U(6, q), O(7, q) \}$
(or $\cP = \Sp(6, q)$ and $q$ even),
then we find a polar space $\cP'$ of rank $2$
with parameter $e+1$ in $\cP$ (namely,
$O(5, q)$, $U(5, q)$, or $O^-(6, q)$). Let $Y$
denote the line set of $\cP'$. Then
its inner distribution is
\[
 a = (1, 0, q^{e+1}(q+1), 0, q^{2e+3}).
\]
We find that ${(aQ)}_{10} = {(aQ)}_{20} = {(aQ)}_{21} = 0$,
that is $\chi_Y \in \< \bm{j} \> + V_{11}$.
By Lemma~\ref{lem:div_by_example}, we find
\begin{Lemma}\label{lem:subrank2_div}
 Let $Z$ be a family of lines with $\chi_Z \in V_{11}^\perp$
 for $\cP \in \{ O^+(6, q), \allowbreak U(6, q),\allowbreak O(7, q) \}$.
 Then for some integer $m$, we find
 \begin{enumerate}[(a)]
  \item If $e \in \{ 0, 1, 2 \}$ or if $e = \frac{3}{2}$ and $q \equiv 0,1 \pmod{3}$, then $|Z| = m(q^{e+1}+1)(q^{e+2}+1)$.
  \item If $e = \frac{1}{2}$, then $|Z| = m(q^{\frac{1}{2}}+1)(q^{\frac{5}{2}}+1)$.
  \item If $e = \frac{3}{2}$ and $q \equiv 2 \pmod{3}$, then $|Z| = m(q^{\frac{5}{2}}+1)(q^{\frac{7}{2}}+1)/3$.
 \end{enumerate}
\end{Lemma}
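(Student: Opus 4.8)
\textit{Proof plan.} The plan is to apply Lemma~\ref{lem:div_by_example} with the line set of the embedded rank-$2$ polar space of~\S\ref{subsec:gq} as the test set, just as in the other divisibility lemmas of that section. Fix $\cP\in\{O^+(6,q),U(6,q),O(7,q)\}$, let $\cP'$ be the rank-$2$ sub-polar-space of parameter $e+1$ found there, and set $Y=\cL(\cP')$; from the inner distribution recorded in~\S\ref{subsec:gq} we have $\chi_Y\in\langle\bm{j}\rangle+V_{11}$ and $|Y|=(q^{e+1}+1)(q^{e+2}+1)$. Since $\chi_Z\in V_{11}^{\perp}$ forces $\psi_Z\perp V_{11}$ while $\psi_Y\in V_{11}$, Lemma~\ref{lem:div_by_example} yields $|Y\cap Z|=|Y|\,|Z|/n$; moreover, because $Y^{G}$ spans $\langle\bm{j}\rangle+V_{11}$ (Lemma~\ref{lem:span}), the condition $\chi_Z\in V_{11}^{\perp}$ is equivalent to $|\tilde Y\cap Z|=|\tilde Y|\,|Z|/n$ for every $G$-image $\tilde Y$ of $\cP'$, i.e.\ $Z$ meets each such quadrangle in a constant number of lines.

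The rest is number theory. The intersection number $|Y|\,|Z|/n$ is a non-negative integer, which pins $|Z|$ to a fixed arithmetic progression; reducing the resulting fractions to lowest terms, and if necessary feeding in the analogous integrality constraints coming from the finer subconfigurations inside a fixed $\cP'$ (recall that among lines of $\cP'$ only the relations $R_{00},R_{11},R_{21}$ occur, so $\chi_Z$ restricted to $\cP'$ is itself a tightly constrained set in the collinearity graph of that quadrangle), one reads off the stated divisors. This reduces to a greatest-common-divisor computation in $q$. For integer $e$ the gcd is clean, giving the uniform divisor of case~(a). For the half-integer values $e\in\{\tfrac12,\tfrac32\}$, where $q$ must be a square, $q^{e+1}+1$ and $q^{e+2}+1$ factor into cyclotomic-type pieces (for instance $q^{3/2}+1=(q^{1/2}+1)(q-q^{1/2}+1)$) and one must decide whether $3$ divides the surviving cofactor --- precisely the $q\bmod 3$ split behind cases~(b) and~(c). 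I would organise the proof by treating the six values of $e$ in turn.

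The structural input is entirely supplied by~\S\ref{subsec:gq} (which eigenspace $\cL(\cP')$ sits in, and its cardinality) together with Lemmas~\ref{lem:div_by_example} and~\ref{lem:span}, so the main obstacle I anticipate is purely the bookkeeping of these greatest common divisors across all $e$ simultaneously, in particular correctly isolating the stray factor $3$ in the case $e=\tfrac32$. A concluding sanity check would be to confirm the divisor is best possible by exhibiting a single $Z\subseteq\cL$ with $\chi_Z\in V_{11}^{\perp}$ of exactly that size and reading off its inner distribution via Lemma~\ref{lem:reg_set_int_numbers}.
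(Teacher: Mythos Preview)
Your strategy matches the paper's exactly: the paper's entire proof is the clause ``By Lemma~\ref{lem:div_by_example}, we find'', with the embedded generalized quadrangle $Y=\cL(\cP')$ of \S\ref{subsec:gq} as the test set.

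There is, however, a genuine gap in the step you label as routine number theory, and it is not one that more careful gcd bookkeeping will close. With $|Y|=(q^{e+1}+1)(q^{e+2}+1)$ and $n=(q^{e+1}+1)(q^{e+2}+1)(q^2+q+1)$, Lemma~\ref{lem:div_by_example} yields
\[
 |Y\cap Z|\;=\;\frac{|Y|\,|Z|}{n}\;=\;\frac{|Z|}{q^{2}+q+1},
\]
so the integrality one actually reads off is $(q^{2}+q+1)\mid |Z|$, \emph{not} $(q^{e+1}{+}1)(q^{e+2}{+}1)\mid |Z|$. For integer $e$ these two moduli are coprime (e.g.\ $\gcd(q^{2}+q+1,(q+1)(q^{2}+1))=1$ when $e=0$), so no refinement to subconfigurations of $\cP'$ can recover the stated divisor. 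In fact the lemma as printed cannot be correct: the lines of a single plane give a set $Z$ with $\chi_{Z}\in\langle\bm{j}\rangle+V_{10}+V_{20}\subseteq V_{11}^{\perp}$ and $|Z|=q^{2}+q+1$, which is not a multiple of $(q^{e+1}+1)(q^{e+2}+1)$ for any $e\ge 0$. The divisor that genuinely falls out of the argument (yours and the paper's) is $q^{2}+q+1$; the printed conclusion appears to be a misprint, and the sanity check you propose at the end of your plan would have flagged it.
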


In the upcoming work~\cite{DBDR2023} the authors characterize
nonempty sets of $(d-1)$-spaces of minimal size in $\< \bm{j} \> + V_{11}$
for all rank $d$ polar spaces with $e \in \{ 0, 1/2, 1\}$.
For the sake of self-containment, here we give a short proof
of their result for $d=3$.

\begin{Lemma}
 Let $Y$ be a family of $(q^{e+1}+1)(q^{e+2}+1)$ lines
 such that $\chi_Y \in \< \bm{j}\> + V_{11}$.
 Then $Y$ is a generalized quadrangle of order $(q, q^{e+1})$.
 In particular, such sets do not exist for $e \in \{ 3/2, 2 \}$.
\end{Lemma}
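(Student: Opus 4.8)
The plan is to first read off the combinatorial data of $Y$ and then argue geometrically. Since $\chi_Y \in \langle \bm{j} \rangle + V_{11}$, the set $Y$ is a regular set, so by Lemma~\ref{lem:reg_set_int_numbers} (applied with rows $00$ and $11$ of~\eqref{Pmat}; alternatively, compare with \S\ref{subsec:gq}, which exhibits a set of the same size lying in $\langle \bm{j}\rangle + V_{11}$) the inner distribution of $Y$ is
\[
 a = (1,\ 0,\ q^{e+1}(q+1),\ 0,\ q^{2e+3}).
\]
In particular $a_{10}=a_{20}=0$, so any two distinct lines of $Y$ are either in relation $R_{11}$ (they meet in a point and do not span a plane of $\cP$) or in relation $R_{21}$ (they are opposite), and every line of $Y$ lies in $R_{11}$ with exactly $q^{e+1}(q+1)$ other lines of $Y$. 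Moreover $\chi_Y \in \langle\bm{j}\rangle + V_{11}\subseteq\langle\bm{j}\rangle + V_{11}+V_{21}$, so Theorem~\ref{thm:cdplanes} gives $m(q^{e+1}+1)(q^{e+2}+1)=|Y|=(q^{e+1}+1)(q^{e+2}+1)$, i.e.\ $m=1$: \emph{every plane of $\cP$ contains exactly one line of $Y$}. This is the structural fact that drives everything else.

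Next I would show $Y$ contains no triangle: if $L_1,L_2,L_3\in Y$ pairwise meet but not in a common point, their three pairwise intersection points are pairwise perpendicular (each pair lies on a totally isotropic line) and hence span a totally isotropic plane that contains $L_1$ and $L_2$, so $\langle L_1,L_2\rangle$ would be a plane of $\cP$, contradicting $(L_1,L_2)\in R_{11}$. Now fix a point $P$ on at least one line of $Y$ and let $r_P$ be the number of lines of $Y$ through $P$. Among the $(q^e+1)(q^{e+1}+1)$ planes of $\cP$ through $P$ (the number of lines in the rank-$2$ residue of $\cP$ at $P$, a generalized quadrangle of order $(q,q^e)$), those containing a line of $Y$ through $P$ number exactly $r_P(q^e+1)$, since each of the $r_P$ lines lies in $q^e+1$ planes and, by the previous paragraph's key fact, no plane contains two lines of $Y$; hence $r_P(q^e+1)\le(q^e+1)(q^{e+1}+1)$, so $r_P\le q^{e+1}+1$. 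On the other hand, for any $L\in Y$ the $q^{e+1}(q+1)$ lines of $Y$ meeting $L$ each meet it in a single point, so $\sum_{P\in L}(r_P-1)=q^{e+1}(q+1)$, i.e.\ $\sum_{P\in L}r_P=(q+1)(q^{e+1}+1)$. Combined with $r_P\le q^{e+1}+1$ this forces $r_P=q^{e+1}+1$ for every point $P$ on a line of $Y$; counting flags then shows $Y$ covers exactly $v:=(q+1)|Y|/(q^{e+1}+1)=(q+1)(q^{e+2}+1)$ points.

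Finally I would check the generalized quadrangle axiom. Two lines of $Y$ meet in at most one point (relations $R_{11},R_{21}$), each line of $Y$ carries $q+1$ points, and each covered point lies on $q^{e+1}+1$ lines of $Y$. Fix $L\in Y$; by the no-triangle property a covered point off $L$ is collinear (inside $Y$) with at most one point of $L$. Counting, on each of the $r_P-1=q^{e+1}$ lines of $Y$ through a point $P\in L$ other than $L$, the $q$ points distinct from $P$ — all of them off $L$, and all distinct as $P$ ranges over $L$ — yields $(q+1)\,q^{e+1}\,q=q^{e+2}(q+1)=v-(q+1)$ covered points off $L$ that are collinear with a point of $L$; hence every covered point off $L$ is collinear with a (necessarily unique) point of $L$. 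Thus $Y$, together with the points it covers, is a generalized quadrangle of order $(q,q^{e+1})$. For $e\in\{3/2,2\}$ this would be a thick generalized quadrangle with parameters $(s,t)=(q,q^{e+1})$ and $t=q^{e+1}>q^2=s^2$, violating Higman's inequality $t\le s^2$, so no such $Y$ exists. I expect the one genuinely nonroutine point to be the plane count giving $r_P\le q^{e+1}+1$; the rest is bookkeeping once every plane is known to carry exactly one line of $Y$.
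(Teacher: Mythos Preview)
Your proof is correct and follows essentially the same route as the paper: compute the inner distribution from Lemma~\ref{lem:reg_set_int_numbers}, bound the number of lines of $Y$ through a covered point by $q^{e+1}+1$ (you count planes on $P$, the paper phrases this as a partial-ovoid bound in the residual generalized quadrangle of $P$---these are the same argument), force equality by averaging along a line of $Y$, and then verify the GQ axiom by a short counting argument. Your explicit appeal to Theorem~\ref{thm:cdplanes} (exactly one line of $Y$ per plane) and to Higman's inequality for the nonexistence when $e\in\{3/2,2\}$ are nice touches that the paper leaves implicit.
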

\begin{proof}
 By Lemma~\ref{lem:reg_set_int_numbers}, the inner distribution of $Y$ is $a$ as in the above.
 We will show the standard axioms for finite generalized quadrangles
 of order $(q, q^e)$. Clearly, each line in $Y$ has $q+1$ points.
 Hence, we need to show (1) that each point on one line in $Y$
 lies on $q^{e+1}+1$ lines of $Y$, and (2) that for a line $L \in Y$
 and a point $P$ on a line in $Y$, but not on $L$, there exists
 precisely one line on $P$ which meets $L$ in a point.

 Fix a line $L \in Y$.
 On average a point of $L$ lies on $a_{12}/(q+1) = q^{e+1}$ lines of $Y \setminus \{ L \}$.
 Hence, we find a point $P$ on $L$ which lies on at least $q^{e+1}$ lines of $Y \setminus \{ L \}$.
 In the residue of $P$ we see that $L$ plus these at least $q^{e+1}$ lines of $Y \setminus \{ L \}$ form
 a partial ovoid.
 But a partial ovoid of a generalized quadrangle of order $(q, q^e)$
 has at most size $q^{e+1}+1$,
 so $P$ lies on precisely $q^{e+1}+1$ lines.
 Hence, all points on a line of $Y$ lie on precisely $q^{e+1}+1$ lines of $Y$.

 Now suppose that $M \in Y$ does not contain $P$.
 There cannot be two lines $L_1, L_2 \in Y$
 which contain $P$ and meet $M$ as these are
 necessarily in relation $R_{11}$ (which is forbidden).

 It remains to show that there is one line $N \in Y$
 through $P$ which meets $M$.
 As there are no two lines through $P$ meeting $M$,
 the number of lines of $Y$ not through $P$ meeting a line
 through $P$ equals $(q^{e+1}+1) \cdot q \cdot q^{e+1}$.
 This equals the number of all lines of $Y$ not through $P$.
\end{proof}

\subsection{Ovoids and Point-Pencils}\label{subsec:ovoids}

Suppose that $\cP$ contains an ovoid $\cO$,
that is a set of $q^{e+2}+1$ pairwise non-collinear
points. A notable property of an ovoid is that each
plane of $\cP$ meets $\cO$ in precisely one point.
Take $Y$ to be the union of the point-pencils (of lines)
through the elements of $\cO$. We saw in \S\ref{subsec:ptpen}
that a point-pencil lies in $\<\bm{j}\> + V_{10} + V_{11}$,
so also $\chi_Y \in \<\bm{j}\> + V_{10} + V_{11}$.
The Example in \S\ref{subsec:planes} shows that $\chi_Y \in \<\bm{j}\> + V_{11} + V_{21}$.
Hence, $\chi_Y \in \<\bm{j}\> + V_{11}$.
Note that $|Y| = (q+1)(q^{e+1}+1)(q^{e+2}+1)$.

Ovoids in rank 3 polar spaces are somewhat rare,
in fact it has been shown in~\cite{T1981} that
$\Sp(6,q)$, $O(7,q)$ for $q$ even, $O^-(8,q)$, and $U(7,q)$
contain no ovoids,
in~\cite{DBM} that $U(6,4)$ contains no ovoid,
and in~\cite{BGS} that $O(7,q)$ contains no ovoid
when $q > 3$ is prime.
Ovoids are known to exist in $O^+(6,q)$ for all $q$
(here they are images under the Klein correspondence
of spreads in $\PG(3,q)$),
and in $O(7,q)$ when $q = 3^{h}$~\cite{BKLP2007}.

\subsection{Constructions from \texorpdfstring{$m$}{m}-ovoids}\label{subsec:movoid}

Take $\cP$ and $\cP'$ as in \S\ref{subsec:gq}, that is,
$\cP \in \{ O^+(6, q), U(6, q), O(7, q) \}$
(or $\cP = \Sp(6, q)$ with $q$ even),
and $\cP$ is an embedded
$O(5, q)$, $U(5, q)$, or $O^-(6, q)$, respectively.
An \textit{$m$-ovoid} of $\cP'$ is a set
of points $\cO$ of $\cP'$ such that each line of $\cP'$
meets $\cO$ in $m$ points.
It follows that $|\cO| = m(q^{e+2}+1)$.
The notion of an $m$-ovoid was introduced by Thas~\cite{T1989};
the special case of a hemisystem, that is an $m$-ovoid with $m = (q+1)/2$,
even goes back to Segre~\cite{Segre1965}.

Suppose we have an $m$-ovoid $\cO$ of $\cP' \leq \cP$.
Let $Y$ be the set
of lines in $\cP$ which meet $\cP$ precisely in a
point of $\cO$, and let $Z$ be the set of lines of $\cP'$.
For a point $P \in \cP$, let $Y_P$ be the point-pencil of lines through $P$.
Then
\[
 \chi_{Y} = \left( \sum_{P \in \cO} \chi_{Y_P} \right) - m\chi_{Z}.
\]
As $\chi_{Y_P} \in \< \bm{j} \> + V_{10} + V_{11}$ (see \S\ref{subsec:ptpen}) and
$\chi_{Z} \in \< \bm{j} \> + V_{11}$, we find that $\chi_Y \in \< \bm{j} \> + V_{10} + V_{11}$.
Now each plane of $\cP$ meets $\cP'$ in a line, and so
contains precisely $mq$ lines of $Y$,
hence $\chi_Y \in {(V_{10}+V_{20})}^\perp$ (see \S\ref{subsec:planes}).
This allows us to conclude the following:

\begin{Lemma}\label{lem:movoid}
 Let $q$ be odd.
 If $O(5, q)$, $U(5, q)$, respectively, $O^-(6, q)$
 contains an $m$-ovoid, then
 $O^+(6, q)$, $U(6, q)$, respectively, $O(7, q)$
 contains a family $Y$ such that $\chi_{Y} \in \< \bm{j}\> + V_{11}$
 and
 \[
  |Y| = mq(q^{e+1}+1)(q^{e+2}+1).
 \]
 In particular, $Y$ exists for $O^+(6, q)$ and $O(7, q)$.
\end{Lemma}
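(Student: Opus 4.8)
The plan is to read off $\chi_Y \in \< \bm{j} \> + V_{11}$ by combining the two eigenspace containments already recorded in this subsection, and then to pin down $|Y|$ by a single double count; essentially all of the geometric work has been done in the discussion above.

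First I would deduce the eigenspace statement. We have noted that $\chi_Y \in \< \bm{j} \> + V_{10} + V_{11}$ (each point-pencil $\chi_{Y_P}$ lies there by \S\ref{subsec:ptpen}, and $\chi_Z \in \< \bm{j} \> + V_{11}$ by \S\ref{subsec:gq}) and, separately, that $\chi_Y \in {(V_{10}+V_{20})}^\perp$ (every plane of $\cP$ meets $\cP'$ in a line and therefore carries exactly $mq$ lines of $Y$; apply \S\ref{subsec:planes} and Lemma~\ref{lem:div_by_example}). Since $\Rf^{n} = \< \bm{j} \> \oplus V_{10} \oplus V_{11} \oplus V_{20} \oplus V_{21}$ is an orthogonal decomposition, the first containment forces the $V_{20}$- and $V_{21}$-components of $\chi_Y$ to vanish, while ${(V_{10}+V_{20})}^\perp = \< \bm{j} \> + V_{11} + V_{21}$ forces the $V_{10}$- and $V_{20}$-components to vanish; intersecting the two conditions leaves only $\< \bm{j} \> + V_{11}$.

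Next I would compute $|Y|$ by counting the incident pairs $(P, L)$ with $P \in \cO$ and $P \in L \in \cL$. Summing over $P$ first, this equals $|\cO|$ times the number of lines of $\cP$ on a fixed point; the residue of a point in a rank $3$ polar space of parameter $e$ is a generalized quadrangle of order $(q, q^e)$, so that number is $(q+1)(q^{e+1}+1)$. Since $\cO$ is an $m$-ovoid of the rank $2$ polar space $\cP'$ of parameter $e+1$, a generalized quadrangle of order $(q, q^{e+1})$, we have $|\cO| = m(q^{e+2}+1)$ and $|Z| = (q^{e+1}+1)(q^{e+2}+1)$. As $\cP'$ is a hyperplane section of $\cP$, a line $L \notin Z$ meets $\cP'$ in a single point, so evaluating the identity $\chi_Y = \bigl(\sum_{P\in\cO}\chi_{Y_P}\bigr) - m\chi_Z$ coordinatewise gives $\chi_Y(L) = |L\cap\cO|$ for $L \notin Z$ and $\chi_Y(L) = 0$ for $L \in Z$. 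Hence
\[
 |Y| = \sum_{L\in\cL}|L\cap\cO| - m|Z| = |\cO|(q+1)(q^{e+1}+1) - m(q^{e+1}+1)(q^{e+2}+1) = mq(q^{e+1}+1)(q^{e+2}+1).
\]

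Finally, for the ``in particular'' I would specialise: with $\cP = O^+(6,q)$ take $\cP' = O(5,q)$, which carries an ovoid (the case $m = 1$) for every $q$; with $\cP = O(7,q)$ take $\cP' = O^-(6,q)$, which admits a hemisystem --- an $m$-ovoid with $m = (q+1)/2$ --- for every odd $q$, going back to Segre~\cite{Segre1965} for $q = 3$. I expect the only genuine obstacle to be this last, imported ingredient, namely the existence of hemisystems of $O^-(6,q)$ for all odd $q$; the remainder is bookkeeping, where the points to watch are that a hyperplane section really forces $|L \cap \cP'| \in \{1, q+1\}$ for every line $L$ of $\cP$, and that the correct orders $(q, q^e)$ versus $(q, q^{e+1})$ are used for the residue of a point and for $\cP'$ itself.
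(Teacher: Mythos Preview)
Your argument is correct and matches the paper's: the eigenspace containment is exactly the intersection $(\<\bm{j}\>+V_{10}+V_{11})\cap(V_{10}+V_{20})^\perp$ established in the discussion preceding the lemma, and your size count is equivalent to the paper's one-line version, which simply observes that each point of $\cO$ lies on $(q+1)(q^{e+1}+1)-(q^{e+1}+1)=q(q^{e+1}+1)$ lines of $\cP$ outside $\cP'$ and that every such line meets $\cO$ exactly once. For the ``in particular'' you correctly flag the only external input; the relevant reference for hemisystems of $O^-(6,q)$ (equivalently, of $H(3,q^2)$) for all odd $q$ is Cossidente--Penttila~\cite{CP2005}, which the paper cites.
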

\begin{proof}
 We already saw $\chi_Y \in \< \bm{j}\> + V_{11}$.
 Then number of lines through a point $P$ of $\cP'$
 which are not in $\cP'$ is $q(q^{e+1}+1)$. Hence,
 the expression for $|Y|$ follows from $|Y| = q(q^{e+1}+1)|Y'|$.
\end{proof}

There are several known infinite families and sporadic examples of $m$-ovoids that can be used in this construction for
$O^-(6,q) \leq O(7,q)$~\cite{BLMX2018,CP2017,CP2005,LV2023},
and for $O(5,q) \leq O^{+}(6,q)$~\cite{BLP2009, CCEM, FMX2016, FT}.
There are currently no known examples of $m$-ovoids in $U(6,q)$.

Note that Lemma~\ref{lem:movoid} was shown in~\cite{DBDR2023}
for the special case of hemisystems in $O^-(6,q)$,
although our proof here is different.

\subsection{Boolean Degree 1 Functions}

In~\cite{FI2019} a family of lines $Y$ with $\chi_Y \in \< \bm{j}\> + V_{10} + V_{11}$
is called a Boolean degree $1$ function.\footnote{%
As these are precisely the family of lines in the span of the characteristic
vectors of point-pencils, to if we consider points as variables, then
vectors in $\< \bm{j}\> + V_{10} + V_{11}$ are the natural analog of
affine functions on the hypercube.} There it is
conjectured that all such sets are trivial in a certain sense
if the rank of the polar space is large enough.
More formally, we saw that we find these examples for $Y$ with
$\chi_Y \in \< \bm{j}\> + V_{10} + V_{11}$:
\begin{enumerate}[(A)]
 \item The set of all lines through a point, see \S\ref{subsec:ptpen}.
 \item The set of all lines in a degenerate hyperplane, see \S\ref{subsec:ptpen}.
 \item The set of all lines in a nondegenerate hyperplane, see \S\ref{subsec:gq}.
\end{enumerate}
We also know that if $Y$ and $Z$ satisfy $\chi_Y, \chi_Z \in \< \bm{j}\> + W$,
then also
\begin{enumerate}[(I)]
 \item the complement $\overline{Y}$ of $Y$ satisfies $\chi_{\overline{Y}}  \in \< \bm{j}\> + W$,
 \item if $Y \subset Z$, then $\chi_{Y \setminus Z}  \in \< \bm{j}\> + W$,
 \item if $Y \cap Z = \emptyset$, then $\chi_{Y \cup Z}  \in \< \bm{j}\> + W$.
\end{enumerate}

Motivated by results on the hypercube, the Johnson graph,
the conjugacy class scheme of the symmetric group and other association schemes,
Conjecture 5.1 in~\cite{FI2019} was given. Limited to our case of lines, it can be paraphrased as follows:
\begin{Conjecture}\label{conj:deg1}
      There exists a constant $d_0(q)$ such that if $\cP$ is a rank $d$ polar space
      with $d \geq d_0(q)$ and $Y$ is a family of lines with $\chi_Y \in \< \bm{j}\> + V_{10} + V_{11}$,
      then $Y$ can be obtained
by combining the examples (A)--(C) with the operations (I)--(III) (with $W = V_{10} + V_{11}$).
\end{Conjecture}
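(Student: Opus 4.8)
Since this is a conjecture rather than a theorem, what follows is a proposed line of attack, modelled on the resolution of the analogous ``high rank forces triviality'' statements for Cameron--Liebler sets of generators in polar spaces and for Boolean degree $1$ functions on the Johnson and Grassmann schemes. \textbf{Step 1: attach a parameter and establish local regularity.} Write $\chi_Y = \tfrac{|Y|}{n}\bm j + v_{10} + v_{11}$ with $v_{10}\in V_{10}$, $v_{11}\in V_{11}$, which is to say $\chi_Y \perp V_{20}\oplus V_{21}$. For a point $P$ let $n(P)$ be the number of lines of $Y$ through $P$, and for a plane $\Pi$ let $m(\Pi)$ be the number of lines of $Y$ in $\Pi$. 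Combining $\chi_Y\perp V_{21}$ with the fact (Lemma~\ref{lem:span}) that point-pencils span $\<\bm j\>+V_{10}+V_{11}$, and $\chi_Y\perp V_{20}$ with the fact (\S\ref{subsec:planes}) that the line sets of planes lie in $\<\bm j\>+V_{10}+V_{20}$, together with Lemma~\ref{lem:div_by_example}, one obtains linear relations tying $n(P)$, $m(\Pi)$, $|Y|$ and the incidence numbers of $Y$ with the examples (A)--(C). I expect these to force a single integer parameter $x$ such that, up to the contributions of the hyperplane-type examples, $n(P)$ takes only the two values $0$ and $q^{e+1}+1$ and $m(\Pi)$ is likewise two-valued, exactly as for Cameron--Liebler line classes in $\PG(3,q)$. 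Feeding the eigenvalues in~\eqref{Pmat} into an expander-mixing estimate for the ``opposite lines'' graph $R_{21}$ (on which $\chi_Y$ has spectrum supported only on $V_{00},V_{10},V_{11}$) should then yield the dichotomy that all but a bounded (in terms of $q$ and $x$) number of points and planes are already ``pure''.

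\textbf{Step 2: classify bounded parameter.} Classify directly all such $Y$ whose parameter $x$ lies below a $q$-dependent bound. The relations $R_{10}$ and $R_{11}$ impose a rigid local geometry: once $L\in Y$, many lines of $Y$ meet $L$, and iterating this inside the residue of a point of $L$ forces the appearance of a full point-pencil, the line set of a generalized quadrangle, or the lines of a (possibly degenerate) hyperplane; combined with the operations (I)--(III) these should exhaust the bounded-parameter cases, in the spirit of the known low-parameter Cameron--Liebler classifications and of the characterization in~\cite{DBDR2023} of minimal sets in $\<\bm j\>+V_{11}$.

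\textbf{Step 3: induction on the rank --- the main obstacle.} For an arbitrary $Y$ in a rank-$d$ polar space with $d$ large, the goal is to manufacture a strictly simpler instance one rank down. The natural reduction is to pick a point $P$ with $n(P)$ extremal and pass to the residue $\cP_P$, a polar space of rank $d-1$ with the same parameter $e$, in which lines of $\cP$ through $P$ become points and planes through $P$ become lines; one then argues that the ``link'' of $Y$ at $P$ is again a set of the conjectured type in $\cP_P$ and invokes induction, or alternatively one restricts $Y$ to the degenerate hyperplane $P^\perp$. Showing that such a reduction preserves membership in $\<\bm j\>+V_{10}+V_{11}$ while changing the parameter in a controlled way --- and that for $d\ge d_0(q)$ a ``bad'' $Y$ cannot avoid having a bad but simpler residue --- is precisely where the rank threshold is used, and I regard it as the genuinely hard step; it is what keeps the statement a conjecture rather than a theorem.

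\textbf{Step 4: stability cleanup.} Finally, upgrade the approximate description from Step 1 to an exact one: the bounded number of exceptional points and planes is eliminated by a counting argument exploiting that $\chi_Y$, being an honest combination of eigenvectors of the scheme, cannot be a small perturbation of a genuine example without equalling it. Once Steps 1--3 are available this last step should be routine.
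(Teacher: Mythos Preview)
There is nothing to compare against: the statement is a \texttt{Conjecture} and the paper offers no proof. On the contrary, immediately after stating it the paper observes that the $m$-ovoid construction of \S\ref{subsec:movoid} already violates the conclusion in rank~$3$ for $O^+(6,q)$ and $O(7,q)$ with $q$ odd, and remarks that ``generalizing the $m$-ovoid construction given here to general rank would most likely disprove the conjecture''. So you are sketching an attack on a statement the authors themselves suspect to be false; any honest plan would have to explain why the $m$-ovoid obstruction cannot propagate to large rank, and your outline never engages with this.

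Independently, Step~1 rests on a wrong expectation. You hope to force $n(P)$ to be two-valued (after peeling off hyperplane pieces), but this fails already for the trivial examples the conjecture is supposed to characterize: the disjoint union of the point-pencils through two non-collinear points $P_1,P_2$ has no hyperplane component, yet $n(P)\in\{0,1,2\}$ for $P\neq P_1,P_2$ according to how many of the $P_i$ are collinear with $P$, plus the full pencil size at each $P_i$ --- at least four values. The $m$-ovoid examples in $\<\bm{j}\>+V_{11}$ likewise yield more than two values of $n(P)$. Since that dichotomy is the engine driving your Steps~2--4, the outline does not get off the ground; the analogy with Cameron--Liebler line classes in $\PG(3,q)$ is misleading here because the family of degree-$1$ examples on lines of a polar space is much richer than its $\PG(3,q)$ counterpart.
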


In~\cite{FI2019}, this is formally shown for lines in $O^+(2d, 2)$ for $d\geq 3$.
Hence, for our case of rank $3$ polar spaces Conjecture \ref{conj:deg1} holds for this case.
Note that Theorem 5.2 in~\cite{FI2019} is not stated correctly for the case $d=3$
(while being correct for the other cases) as it claims that at most
one non-degenerate hyperplane section can occur. The actually correct statement
can be found in Conjecture 5.1~in \cite{FI2019}.

The analogous version of Conjecture \ref{conj:deg1} has been recently shown
for vector spaces, see~\cite{Ihringer2023}. Maybe this adds some plausibility
to Conjecture \ref{conj:deg1}.
The examples derived above from $m$-ovoids show that
the assertion of the conjecture does not hold
for $O^+(6, q)$ or $O(7, q)$ and $q$ odd.
Generalizing the $m$-ovoid construction given here to general
rank would most likely disprove the conjecture.

\subsection{Spreads}\label{subsec:spreads}

A spread of a polar space of rank $d$
is a set of $q^{d+e-1}+1$ pairwise disjoint
maximal subspaces of $\cP$.
Suppose that the rank 3 polar space $\cP$ contains a spread $\cS$,
that is a set of $q^{e+2}+1$ pairwise disjoint planes.
Spreads always exist for $\Sp(6, q)$,
never exist for $O^+(6, q)$ and $U(6, q)$,
and the situation is unclear for other rank 3 polar spaces, cf.~\cite{DBKM2011}.
Let $Y$ denote the set of lines which
are contained in one element of $\cS$.
Then the inner distribution of $Y$ is
\[
 a = (1, q^2+q, 0, q^{e+2}(q+1), q^{e+4}).
\]
We find that ${(aQ)}_{10} = {(aQ)}_{11} = {(aQ)}_{21} = 0$,
that is $\chi_Y \in \< \bm{j}\> + V_{20}$.
By Lemma~\ref{lem:div_by_example}, we find
\begin{Lemma}
 Let $Z$ be a family of lines in a rank 3 polar space
 with a spread and $\chi_Z \in V_{20}^\perp$. Then
 $|Z| = m(q^{e+1}+1)$ for some integer $m$.
\end{Lemma}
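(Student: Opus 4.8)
The plan is to combine the eigenspace membership $\chi_Y \in \<\bm{j}\> + V_{20}$ just established for the line set $Y$ of the spread $\cS$ with Lemma~\ref{lem:div_by_example}, and then extract a divisibility condition from the integrality of an intersection number.

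First I would pin down $|Y|$. Since $\cS$ consists of $q^{e+2}+1$ pairwise disjoint planes, and each plane is a $\PG(2,q)$ containing $q^2+q+1$ lines, while two distinct planes of $\cS$ share no line (being disjoint), the lines contained in elements of $\cS$ are all distinct; hence $|Y| = (q^{e+2}+1)(q^2+q+1)$. (As a sanity check, this equals $\sum_i a_i$ for the inner distribution $a$ listed above.) Recall also $n = |\cL| = (q^{e+1}+1)(q^{e+2}+1)(q^2+q+1)$. Next, since $\chi_Y - \tfrac{|Y|}{n}\bm{j} \in V_{20}$, and writing $\chi_Z = \tfrac{|Z|}{n}\bm{j} + \psi_Z$ with $\psi_Z \in \<\bm{j}\>^\perp$ one has $\psi_Z \in V_{20}^\perp$ as well (because $\bm{j} \in V_{00} \perp V_{20}$), it follows that $\psi_Z \perp \bigl(\chi_Y - \tfrac{|Y|}{n}\bm{j}\bigr)$, so $\chi_Z \in \<\bm{j}\> + \chi_Y^\perp$. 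By Lemma~\ref{lem:div_by_example} this yields $|Y \cap Z| = |Y|\cdot|Z|/n$. Substituting the values of $|Y|$ and $n$, the common factor $(q^{e+2}+1)(q^2+q+1)$ cancels and we get $|Y \cap Z| = |Z|/(q^{e+1}+1)$. Since the left-hand side is a nonnegative integer, $q^{e+1}+1$ divides $|Z|$, i.e.\ $|Z| = m(q^{e+1}+1)$ for some integer $m$.

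This argument is entirely routine and there is essentially no obstacle; the only points needing a moment's care are the bookkeeping for $|Y|$ (in particular, using disjointness of the spread planes to rule out double-counting a line) and verifying that membership of $\chi_Z$ in $V_{20}^\perp$ is exactly the hypothesis required to invoke Lemma~\ref{lem:div_by_example}, which is checked above. Alternatively, one may bypass this last verification by appealing directly to the ``in particular'' part of Lemma~\ref{lem:span} applied to the collineation-group orbit of $Y$, since $\chi_Y \in \<\bm{j}\> + V_{20}$ with $V_{20}$ a single eigenspace.
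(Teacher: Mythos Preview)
Your proof is correct and follows precisely the approach indicated in the paper: apply Lemma~\ref{lem:div_by_example} with $Y$ the line set of the spread (for which $\chi_Y \in \<\bm{j}\> + V_{20}$ was just shown), then read off the divisibility from $|Y\cap Z| = |Y|\cdot|Z|/n = |Z|/(q^{e+1}+1)$. The paper merely writes ``By Lemma~\ref{lem:div_by_example}, we find'' and leaves these details implicit, so your write-up is simply a fuller version of the same argument.
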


\subsection{Generalized Hexagons}\label{subsec:gh}

If $\cP = O(7, q)$, then $\cP$ contains
the split Cayley hexagon, see~\cite{Tits1959}. Its line
set $Y$ has the inner distribution
\[
 a = (1, q^2+q, 0, q^4+q^3, q^5).
\]
We find that ${(aQ)}_{10} = {(aQ)}_{11} = {(aQ)}_{21} = 0$,
that is $\chi_Y \in \< \bm{j}\> + V_{20}$.

Now that we know two examples,
we can characterize families $Y$ with $\chi_Y \in \< \bm{j} \> + V_{20}$
of minimal size\footnote{We will give an argument in Proposition~\ref{prop:nothing_in_V21}.}, that is $|Y| = (q^{e+2}+1)(q^2+q+1)$
and $Y$ has inner distribution
\[
 a = (1, q^2+q, 0, q^{e+2}(q+1), q^{e+4}).
\]
From now on we assume that $Y$ is as above.
Theorem~\ref{cor:char_const_point} shows the following lemma.
\begin{Lemma}\label{lem:hex_pt}
      Each point of $\cP$ lies on $q+1$ lines of $Y$.
\end{Lemma}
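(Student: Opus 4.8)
The plan is to apply Theorem~\ref{cor:char_const_point} directly. We are told that $\chi_Y \in \< \bm{j}\> + V_{20}$, and in particular $\chi_Y \in \< \bm{j}\> + V_{20} + V_{21}$. By Theorem~\ref{cor:char_const_point}, a set of lines with characteristic vector in $\< \bm{j}\> + V_{20} + V_{21}$ is a combinatorial design with respect to points; equivalently, every point of $\cP$ lies on precisely $m$ lines of $Y$ for some constant $m$. So the only thing left to do is to identify the value of $m$.

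To pin down $m$, I would use the size formula from Theorem~\ref{cor:char_const_point}, namely $|Y| = m\frac{(q^{e+2}+1)(q^2+q+1)}{q+1}$, together with the known size $|Y| = (q^{e+2}+1)(q^2+q+1)$ for the minimal-size sets in $\< \bm{j}\> + V_{20}$ recorded just before the lemma. Setting these equal gives $m = q+1$ immediately. Alternatively — and perhaps more transparently — one can use Lemma~\ref{lem:reg_set_int_numbers} with the relation $R_{10}$ (or simply double-count incidences): since each line of $Y$ has $q+1$ points and $|Y| = (q^{e+2}+1)(q^2+q+1)$, the total number of (point, line) incidences with lines in $Y$ is $(q+1)(q^{e+2}+1)(q^2+q+1)$, while the number of points of the polar space is $(q^{e+2}+1)(q^2+q+1)$ (this is the standard count of isotropic points in a rank $3$ polar space with parameter $e$, equal to $|\cL|/\binom{\ldots}{}$; more directly it follows by dividing the total incidence count by the uniform value $m$ that Theorem~\ref{cor:char_const_point} guarantees). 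Dividing, every point lies on exactly $q+1$ lines of $Y$.

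There is essentially no obstacle here: the content was already done in Theorem~\ref{cor:char_const_point}, and the lemma is just the specialization of that theorem to the minimal-size sets in $\< \bm{j}\> + V_{20}$, reading off $m = q+1$ from the size. If one wants a fully self-contained one-line argument avoiding the size formula, the incidence double count above suffices once we know (from Theorem~\ref{cor:char_const_point}) that the number of lines of $Y$ through a point is constant.
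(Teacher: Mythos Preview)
Your argument is correct and is essentially identical to the paper's: the paper's entire proof is the single sentence ``Theorem~\ref{cor:char_const_point} shows the following lemma,'' i.e.\ exactly your first approach of invoking Theorem~\ref{cor:char_const_point} and reading off $m=q+1$ from the size formula. Your alternative double-counting remark is fine too (the number of isotropic points in a rank~$3$ polar space with parameter $e$ is indeed $(q^{e+2}+1)(q^2+q+1)$), but it is not needed.
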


\begin{Lemma}\label{lem:V02_ln_pl_int}
 For any plane $\Pi$ of $\cP$, we have $|Y \cap \Pi| \in \{ 0, 1, q+1, q^2+q+1 \}$,
 and if $|Y \cap \Pi| = q+1$, then the lines of $Y \cap \Pi$ are a point-pencil,
 that is the $q+1$ lines meet in a common point.
 Here $|Y \cap \Pi| = q+1$ cannot occur if $e \in \{ 0, 1/2 \}$.
\end{Lemma}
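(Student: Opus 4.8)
The plan is to combine two pieces of information about $Y$: the eigenspace condition $\chi_Y \in \< \bm{j}\> + V_{20}$, which via the earlier subsections controls intersections with planes and with the line sets of embedded subspaces, and the concrete fact (Lemma~\ref{lem:hex_pt}) that every point lies on exactly $q+1$ lines of $Y$. First I would observe that within a fixed plane $\Pi$, the set $Y \cap \Pi$ is a set of lines of the projective plane $\Pi = \PG(2,q)$ in which, by Lemma~\ref{lem:hex_pt} applied to the points of $\Pi$, every point of $\Pi$ lies on $0$, $1$, or $q+1$ of these lines (a point of $\Pi$ lies on $q+1$ lines of $\Pi$ in total, so the number on lines of $Y \cap \Pi$ is at most $q+1$, but I must rule out intermediate values). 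The key local claim is therefore: \emph{a set $\cM$ of lines in $\PG(2,q)$ such that every point lies on $0$, $1$, or $q+1$ lines of $\cM$ must be empty, a single line, a dual hyperoval/pencil of $q+1$ concurrent lines, or all $q^2+q+1$ lines.}

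**The plane classification.**
To prove that local claim, suppose $\cM$ is nonempty and not all of $\PG(2,q)$. Fix a line $\ell \in \cM$ and a line $m \notin \cM$; these exist. Count incidences between points of $m$ and lines of $\cM \setminus \{\ell, \dots\}$ meeting $m$ — actually, the cleanest route: let $k = |\cM|$ and double-count flags $(P, L)$ with $P \in L \in \cM$, getting $\sum_P r_P = k(q+1)$ where $r_P \in \{0,1,q+1\}$. Say there are $s$ points with $r_P = q+1$ (these points each lie on \emph{all but possibly some} — rather, they lie on $q+1$ lines of $\cM$, i.e.\ on every line of $\cM$ through them, but since $q+1$ is the total this means every line through such a point is in $\cM$). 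So a point $P$ with $r_P = q+1$ has \emph{all} $q+1$ lines through it in $\cM$. If there are two such points $P \ne P'$, then the line $PP'$ together with all lines through $P$ and all lines through $P'$ forces, by a short argument, $\cM$ to be everything once $q \geq 2$ has enough points; so $s \leq 1$. If $s = 1$, with the unique such point $P_0$: every line of $\cM$ not through $P_0$ would create a point off $P_0$ with at least... one needs to check such a line can't be forced, and in fact $\cM$ = the pencil at $P_0$ (size $q+1$) is the only possibility, since any extra line $\ell'$ not through $P_0$ meets each of the $q+1$ pencil lines in distinct points, each now having $r_P \geq 2$, hence $r_P = q+1$, hence $\geq 2$ full points besides $P_0$, contradiction. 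If $s = 0$, then every point has $r_P \in \{0,1\}$, so $\cM$ is a \emph{partial spread of lines} (pairwise disjoint lines) in $\PG(2,q)$; but two lines in a projective plane always meet, so $|\cM| \leq 1$. This gives exactly the list $\{0, 1, q+1, q^2+q+1\}$ for $|Y \cap \Pi|$, and shows the $q+1$ case is a pencil.

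**The parity/nonexistence refinement for small $e$.**
It remains to exclude $|Y \cap \Pi| = q+1$ when $e \in \{0, 1/2\}$. Here I would use the global eigenspace structure rather than a local count. The idea: if some plane $\Pi$ meets $Y$ in a pencil through a point $P_0$, then consider how $Y$ interacts with the line set $Z$ of a generalized quadrangle $\cP'$ embedded in $\cP$ as in \S\ref{subsec:gq} (available precisely for $\cP \in \{O^+(6,q), U(6,q), O(7,q)\}$, which for rank $3$ covers the $e \in \{0, 1/2, 1\}$ cases; for $e=0$ and $e=1/2$ this applies). Since $\chi_Y \in \< \bm{j}\> + V_{20}$ and $\chi_Z \in \< \bm{j}\> + V_{11}$, Lemma~\ref{lem:div_by_example} gives $|Y \cap Z| = |Y|\,|Z|/n$, and more usefully one gets divisibility or integrality constraints (cf.\ Lemma~\ref{lem:subrank2_div}) on quantities that would be violated if a $q+1$-pencil appeared. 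Alternatively, and perhaps more robustly, I would count: a pencil of $q+1$ lines in $\Pi$ through $P_0$ uses up all $q+1$ lines of $Y$ through $P_0$ (by Lemma~\ref{lem:hex_pt}), so \emph{no other plane through $P_0$} contains a line of $Y$ through $P_0$ except within $\Pi$ — but $P_0^\perp$ and the plane count around $P_0$ then force a contradiction with the inner distribution's value $a_{20} = q^{e+2}(q+1)$ counting lines of $Y$ in relation $R_{20}$, specifically those meeting a fixed line of $Y$ in a point with the perp condition of $R_{20}$; for $e = 0$ or $e = 1/2$ the relevant count $q^{e+2}(q+1)$ is too small to accommodate a full pencil's worth of forced incidences.

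**Expected main obstacle.**
The routine part is the $\PG(2,q)$ classification; the genuinely delicate part is the last step, the nonexistence of the $q+1$ case for $e \in \{0, 1/2\}$. I expect the cleanest argument there to come not from a hands-on incidence count but from feeding the hypothetical configuration (a point $P_0$ all of whose $Y$-lines lie in one plane) into the divisibility conditions of Lemmas~\ref{lem:plane_div}, \ref{lem:pointpencil_div}, or \ref{lem:subrank2_div} applied to a cleverly chosen auxiliary set $Z$ built from $\Pi$, $P_0$, or the embedded quadrangle — the subtlety is choosing $Z$ so that $\chi_Z$ lands in $V_{20}^\perp$ (so the intersection number with $Y$ is forced) while $Z$'s interaction with a $(q+1)$-pencil is arithmetically impossible for exactly $e \in \{0, 1/2\}$ and possible for $e \in \{1, 3/2, 2\}$. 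Pinning down which $Z$ achieves this clean dichotomy is where I expect to spend the most effort.
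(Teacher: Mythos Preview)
Your plane classification in $\PG(2,q)$ is correct and is essentially the same computation the paper carries out, but you have not established its hypothesis. The missing ingredient is the inner-distribution entry $a_{11}=0$: no two lines of $Y$ are in relation $R_{11}$, so whenever two lines of $Y$ meet they lie in a common generator. Passing to the residue at a point $P$ --- a generalized quadrangle of order $(q,q^e)$ --- the $q+1$ lines of $Y$ through $P$ become $q+1$ pairwise collinear points and hence lie on a single line of the quadrangle; back in $\cP$ this says the $q+1$ lines of $Y$ through $P$ all lie in one plane $\Sigma_P$. That is precisely what forces $r_P \in \{0,1,q+1\}$ inside any fixed plane $\Pi$ (according as $\Sigma_P \cap \Pi$ is a point, a line, or all of $\Sigma_P$). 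Lemma~\ref{lem:hex_pt} alone cannot rule out the intermediate values you flagged; the coplanarity coming from $a_{11}=0$ is indispensable, and you never invoke it.

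For the exclusion of $(q+1)$-planes when $e \in \{0,1/2\}$, the route through embedded generalized quadrangles and divisibility is a detour, and your alternative count via $a_{20}$ heads in the wrong direction. Once you know every point $P$ has its plane $\Sigma_P$ carrying the full pencil of $Y$-lines through $P$, the paper's argument is immediate: take any $L \in Y$; each of its $q+1$ points $P$ contributes a plane $\Sigma_P \supseteq L$, but there are only $q^e+1 < q+1$ planes through $L$. Two points of $L$ then share the same $\Sigma_P$, and that plane carries two full pencils, hence (by your own classification step) all $q^2+q+1$ of its lines lie in $Y$. So for $e \in \{0,1/2\}$ every line of $Y$ sits in a $(q^2+q+1)$-plane, and a putative $(q+1)$-plane with pencil point $P_0$ would force $P_0$ onto more than $q+1$ lines of $Y$, contradicting Lemma~\ref{lem:hex_pt}.
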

\begin{proof}
 As the relation $R_{21}$ does not occur,
 all $q+1$ lines in $Y$ through a point
 are coplanar. Hence, as $a_{11} = q^2+q$,
 each point on one line of $Y$ is on
 precisely $q+1$ lines of $Y$ (and these
 lines are coplanar). Hence, if a plane $\Pi$
 contains at least $2$ lines $L_1, L_2$ of $Y$,
 then $Y$ contains at least the $q+1$
 lines of $\Pi$ through $L_1 \cap L_2$.
 Furthermore, if a plane $\Pi$ contains
 at least $q+2$ lines, then $Y$ contains all
 lines of $\Pi$.

 \smallskip

 For $e \in \{ 0, 1/2 \}$ there are less than
 $q+1$ planes through a line, so a plane
 must contain $0$, $1$, or $q^2+q+1$ lines of $Y$.
\end{proof}

Call $\Pi$ a $p$-plane if $|Y \cap \Pi| = p$.

\begin{Lemma}\label{lem:V02_pl_meets}
 Let $\Pi$ be a $p$-plane and $\Sigma$ be an $s$-plane.
 \begin{enumerate}[(a)]
  \item If $p = q^2+q+1$ and $\Pi \cap \Sigma$ is a line,
        then $s=1$.
  \item If $p = q^2+q+1$ and $\Pi \cap \Sigma$ is a point,
        then $s \in \{ 0, 1 \}$.
  \item If $p = q+1$ and $\Pi \cap \Sigma$ is a line of $Y$,
        then $s \in \{ 1, q+1 \}$. If $e=1$, then $s = q+1$.
  \item If $p = q+1$ and $\Pi \cap \Sigma$ is a line not in $Y$,
        then $s = 0$.
 \end{enumerate}
\end{Lemma}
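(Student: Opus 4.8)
The plan is to treat the four parts by local arguments inside the planes involved, relying on two standing facts. First, by Lemma~\ref{lem:hex_pt} every point $P$ of $\cP$ lies on exactly $q+1$ lines of $Y$; and, as in the proof of Lemma~\ref{lem:V02_ln_pl_int} (since $R_{11}$ does not occur, and in the residue generalized quadrangle at $P$ pairwise collinear points lie on a common line), these $q+1$ lines span a single plane $\pi_P$ and are precisely the $q+1$ lines of $\pi_P$ through $P$. Second, two distinct lines of $Y$ meeting in a point are coplanar (relation $R_{10}$); hence, for a fixed line $L\in Y$, each of the $q^2+q$ lines of $Y$ in relation $R_{10}$ with $L$ lies in a unique plane through $L$, namely $\langle L,M\rangle$, so these lines are partitioned among the $q^{e}+1$ planes through $L$.

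Suppose $\Pi$ is a $(q^2+q+1)$-plane. Then for every point $P\in\Pi$ the $q+1$ lines of $\Pi$ through $P$ all lie in $Y$, so they are all $q+1$ lines of $Y$ through $P$; that is, $\pi_P=\Pi$, and no line of $Y$ through a point of $\Pi$ leaves $\Pi$. This gives (a) and (b) at once. For (a): a line $M\in Y\cap\Sigma$ with $M\ne L$ would meet $L=\Pi\cap\Sigma$ in a point $P\in\Pi$, forcing $M\subseteq\pi_P=\Pi$, hence $M\subseteq\Pi\cap\Sigma=L$, a contradiction; so $Y\cap\Sigma=\{L\}$ and $s=1$. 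For (b): if $|Y\cap\Sigma|\ge q+1$, then by Lemma~\ref{lem:V02_ln_pl_int} the lines of $Y\cap\Sigma$ form a point-pencil or all of $\Sigma$, so some $N\in Y\cap\Sigma$ passes through $P=\Pi\cap\Sigma$; but then $N\subseteq\pi_P=\Pi$, so $N\subseteq\Pi\cap\Sigma=\{P\}$, which is absurd. Hence $|Y\cap\Sigma|\le q$ and $s\in\{0,1\}$.

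For (c), $s\ge 1$ because $L\in Y\cap\Sigma$, and $s=q^2+q+1$ is impossible: that would make $\Sigma$ a $(q^2+q+1)$-plane meeting $\Pi$ in the line $L$, forcing $\Pi$ to be a $1$-plane by part~(a), contrary to hypothesis. So $s\in\{1,q+1\}$. For the case $e=1$: let $R$ be the centre of the pencil $Y\cap\Pi$, so $R\in L$, and for each plane $\Pi'$ through $L$ let $t_{\Pi'}$ denote the number of lines of $Y$ other than $L$ contained in $\Pi'$. By Lemma~\ref{lem:V02_ln_pl_int} each $t_{\Pi'}\in\{0,q,q^2+q\}$; by the second standing fact $\sum_{\Pi'}t_{\Pi'}=q^2+q$, the sum being over the $q^{e}+1=q+1$ planes through $L$; and $t_\Pi=q$ since $\Pi$ is a $(q+1)$-plane. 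A value $q^2+q$ among the $t_{\Pi'}$ would force all the others to vanish, contradicting $t_\Pi=q$; so every $t_{\Pi'}\in\{0,q\}$, and a sum of $q+1$ such values equal to $q(q+1)$ forces every $t_{\Pi'}=q$. Thus every plane through $L$, in particular $\Sigma$, is a $(q+1)$-plane, so $s=q+1$.

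Part~(d) is where I expect the real work. Let $R$ be the centre of the pencil $Y\cap\Pi$; since $L\notin Y$ we have $R\notin L$. The key claim is that every line $M\in Y$ coplanar with $L$ lies in $\Pi$. Such an $M$ is distinct from $L$ and meets $L$ in a point $P'\ne R$; set $N=\langle R,P'\rangle\subseteq\Pi$, a pencil line of $\Pi$, so $N\in Y$, and $N$ is coplanar with $L$. Note that $L\not\subseteq\pi_{P'}$: otherwise $L$ and $N$ would be distinct lines in $\Pi\cap\pi_{P'}$, whence $\Pi=\pi_{P'}$ and all $q+1$ lines of $Y$ through $P'$ would lie in $\Pi$, whereas among the lines of $\Pi$ through $P'$ only $N$ lies in $Y$ (as $P'\ne R$) --- impossible. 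Now work in the residue generalized quadrangle at $P'$: the $q+1$ lines of $Y$ through $P'$ correspond to all $q+1$ points of the line corresponding to $\pi_{P'}$, and the point corresponding to $L$ lies off that line, hence is collinear with exactly one of those points. So at most one line of $Y$ through $P'$ is coplanar with $L$; since $N$ is one such line, $M=N\subseteq\Pi$, proving the claim. Finally, any line of $Y$ lying in a plane $\Sigma\ne\Pi$ through $L$ is coplanar with $L$, hence lies in $\Pi$, hence lies in $\Pi\cap\Sigma=L$, contradicting $L\notin Y$; so $Y\cap\Sigma=\emptyset$ and $s=0$. The delicate point throughout --- and the reason the residue quadrangle is needed --- is distinguishing coplanarity (relation $R_{10}$) from mere incidence of two lines.
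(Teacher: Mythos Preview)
Your proof is correct. For (a), (b) and the first assertion in (c) you use the same mechanism as the paper --- that the $q+1$ lines of $Y$ through any point of a $(q^2{+}q{+}1)$-plane $\Pi$ already lie in $\Pi$ --- only packaged through your ``standing facts''. Your treatment of the $e=1$ case in (c) spells out the pigeonhole count that the paper compresses to the single remark ``there are only $q+1$ planes through a line''; the underlying argument is identical.

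Part (d) is where you genuinely diverge, and for good reason. The paper takes $R := M\cap L$, lets $N$ be the unique line of $Y\cap\Pi$ through $R$, and then simply asserts that $M$ and $N$ lie in a forbidden relation. But this step is not justified as written: $M,N\in Y$ share the point $R$, and since $a_{11}=0$ the pair $(M,N)$ is forced into relation $R_{10}$, so $M$ and $N$ span a plane $\pi_R$ of $\cP$ and no contradiction arises directly. Your appeal to the residue generalized quadrangle at $P'=M\cap L$ --- the axiom that a point off a line is collinear with a unique point of that line --- is exactly the extra ingredient needed to conclude $M=N\subseteq\Pi$. Your closing remark is on point: the paper's argument glosses over precisely the distinction between coplanarity and mere incidence that you handle carefully.
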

\begin{proof}
 For (a): Put $L = \Pi \cap \Sigma$.
 Clearly, $\Sigma$ contains at least $L \in Y$.
 If $\Sigma$ contains some other line $M \in Y$,
 then the point $L \cap M$ lies on $q+2$ lines of $Y$
 which contradicts Lemma~\ref{lem:hex_pt}. Hence, $\Sigma$ contains
 precisely one line of $Y$.
 This also implies (c) except for $e=1$.
 For $e=1$, $s=1$ cannot occur as there are only $q+1$
 planes through a line.

 For (b): Put $R = \Pi \cap \Sigma$.
 Suppose that $\Sigma$ contains at least two lines $M, M' \in Y$.
 By Lemma~\ref{lem:V02_ln_pl_int}, there exists a line $N \in Y \cap \Sigma$
 such that $N$ contains $R$. Then $N$ and $q$ lines of $\Pi$
 are in the forbidden relation $R_{21}$.

 For (d): Put $S = \Pi \cap \Sigma$.
 Suppose that $\Sigma$ contains a line $M$ of $Y$.
 Then $M$ is not in $\Pi$, so $M$ meets $\Sigma$
 in a point $R$. By Lemma~\ref{lem:V02_ln_pl_int},
 one line $N$ of $Y \cap \Pi$
 contains $R$. Then $M$ and $N$ are in a forbidden relation $R_{21}$
 which is a contradiction.
\end{proof}

\noindent
In particular, lines in $(q^2+q+1)$-planes never meet
lines in $(q+1)$-planes. We say that a set of lines $Z$
is a geometry of order $(s, t)$ if each line of $Z$ contains
$s+1$ points and each point lies on $t+1$ lines of $Z$.
In~\cite{TVM1998}, an embedding of a geometry $Z$ of order $(q,t)$
in a projective space of order $q$ is called \textit{full},
and it is called \textit{flat} if the lines of $Z$
through a point of $Z$ are coplanar.
In~\cite{TVM1998}, full and flat embeddings of the split Cayley
hexagon are characterized.

\begin{Proposition}\label{prop:class_hex_spread}
 Let $Y$ be a family of lines of size
 $(q^{e+2}+1)(q^2+q+1)$ such that $\chi_Y \in \< \bm{j} \> + V_{20}$.
 Then $Y$ can be partitioned into families of $Y_1$ and $Y_2$ such that
 the following holds.
 \begin{enumerate}[(a)]
  \item All lines in $Y_1$ are disjoint from all lines in $Y_2$.
  \item All planes intersect $Y_1$ in $1$ or $q^2+q+1$ lines.
        The planes with $q^2+q+1$ lines of $Y_1$ form a partial spread of $\cP$.
  \item All planes intersect $Y_2$ in $0, 1$ or $q+1$ lines.
        Then $Y_2$ is a geometry of order $(q,q)$ with $|Y_2| \geq (q^3+1)(q^2+q+1)$
        which does not contain triangles, quadrangles, or pentagons.
        In case of equality, $\cP = O(7, q)$ (respectively, $\cP = \Sp(6, q)$, $q$ even) and $Y_2$ is
        the natural embedding of the split Cayley hexagon in $O(7, q)$ (respectively, in $\Sp(6, q)$).
 \end{enumerate}
 In particular, if $e \in \{ 0, 1/2 \}$, then $Y$ does not exist,
 and if $\cP = \Sp(6, q)$ and $q$ odd, then $Y_2$ is empty.
\end{Proposition}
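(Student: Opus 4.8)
\medskip

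\noindent The plan is to exploit the plane-intersection dichotomy from Lemma~\ref{lem:V02_ln_pl_int}. First I would note that, by Lemma~\ref{lem:hex_pt} and the coplanarity established in the proof of Lemma~\ref{lem:V02_ln_pl_int} (which uses that the relation $R_{11}$ does not occur on $Y$), every point $P$ of $\cP$ lies on exactly $q+1$ lines of $Y$, and these span a unique plane $\Pi_P$; since $q+1\ge 2$, Lemma~\ref{lem:V02_ln_pl_int} forces $\Pi_P$ to be a $(q+1)$-plane or a $(q^2+q+1)$-plane. Let $Y_1$ be the lines of $Y$ lying in some $(q^2+q+1)$-plane and $Y_2$ the lines of $Y$ lying in some $(q+1)$-plane. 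Any $L\in Y$ lies in $\Pi_P$ for $P\in L$, so $Y=Y_1\cup Y_2$; and the observation following Lemma~\ref{lem:V02_pl_meets}, that a line contained in a $(q^2+q+1)$-plane never meets a line contained in a $(q+1)$-plane, applied to a line meeting itself gives $Y_1\cap Y_2=\emptyset$, and in general gives part~(a).

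For part~(b): parts~(a) and~(b) of Lemma~\ref{lem:V02_pl_meets} show that two distinct $(q^2+q+1)$-planes can meet in neither a line nor a point, so these planes are pairwise disjoint generators, i.e.\ a partial spread; they are precisely the planes containing $q^2+q+1$ lines of $Y_1$, while any other plane contains at most one line of $Y_1$ (if it is a $(q+1)$-plane, its $Y$-lines form a point-pencil, hence lie in $Y_2$). The same case distinction on $|Y\cap\Pi|\in\{0,1,q+1,q^2+q+1\}$ gives $|Y_2\cap\Pi|\in\{0,1,q+1\}$ for every plane $\Pi$. Finally, a point $P$ on a line of $Y_2$ lies on no line of $Y_1$ by~(a), so $\Pi_P$ is a $(q+1)$-plane and all $q+1$ lines of $Y$ through $P$ lie in $Y_2$; hence $Y_2$ is a geometry of order $(q,q)$ whose embedding in the projective space underlying $\cP$ is full (lines carry $q+1$ points) and flat (the lines of $Y_2$ at a point form a point-pencil and so are coplanar).

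The combinatorial heart is that $Y_2$ contains no triangle, quadrangle, or pentagon, which I would prove uniformly. Suppose $3\le k\le 5$ and that $L_1,\dots,L_k\in Y_2$ together with distinct points $P_1,\dots,P_k$ form such a configuration, with $P_{i+1}\in L_i\cap L_{i+1}$ (indices mod $k$) and no further incidences. Since $R_{11}$ does not occur on $Y$, two lines of $Y$ meeting in a point span a totally isotropic plane, so $L_i$ and $L_{i+1}$ span $\Pi_{P_{i+1}}$; hence $P_i,P_{i+2}\in\Pi_{P_{i+1}}$, so $P_i\perp P_{i+2}$. As $k\le 5$, any two of $P_1,\dots,P_k$ are at cyclic distance $1$ or $2$, hence pairwise perpendicular; since $L_1\ne L_2$ they are not collinear, so $\langle P_1,\dots,P_k\rangle$ is a totally isotropic subspace of vector dimension $3$, a generator $\tau$ containing every $L_i$. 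Then $\tau$ is a plane with at least two lines of $Y$, none in a $(q^2+q+1)$-plane, so $\tau$ is a $(q+1)$-plane whose $Y$-lines form a point-pencil through some point $P$; but then $P\in L_1\cap L_2=\{P_2\}$ and $P\in L_2\cap L_3=\{P_3\}$ force $P_2=P_3$, a contradiction. Hence (when $Y_2\ne\emptyset$) the incidence graph of $Y_2$ is $(q+1)$-regular bipartite of girth at least $12$, and the Moore bound yields $|Y_2|\ge(q^3+1)(q^2+q+1)$, with equality exactly when $Y_2$ is a generalized hexagon of order $(q,q)$.

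For the equality case, such a $Y_2$ is a full and flat embedding of a generalized hexagon of order $(q,q)$ in the projective space underlying $\cP$, so by the classification of these embeddings in~\cite{TVM1998} it is the natural embedding of the split Cayley hexagon; this forces $\cP\cong O(7,q)$ (equivalently $\Sp(6,q)$ when $q$ is even), in particular $e=1$. The final assertions follow from size counts: if $e\in\{0,1/2\}$ then $(q+1)$-planes cannot occur by Lemma~\ref{lem:V02_ln_pl_int}, so $Y=Y_1$ consists of the $q^2+q+1$ lines in each of $q^{e+2}+1$ pairwise disjoint generators; since $\cP$ has exactly $(q^{e+2}+1)(q^2+q+1)$ points this is a spread, which $O^+(6,q)$ and $U(6,q)$ do not admit, so $Y$ cannot exist. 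And if $\cP=\Sp(6,q)$ with $q$ odd, then $e=1$, so $|Y|=(q^3+1)(q^2+q+1)$ and a nonempty $Y_2$ would satisfy $|Y_2|=|Y|$, making $Y_2$ the split Cayley hexagon with $\cP\cong O(7,q)$, a contradiction; hence $Y_2=\emptyset$. The step I expect to be the main obstacle is this last one: invoking the full-and-flat embedding theorem of~\cite{TVM1998} correctly, both to identify $Y_2$ as the split Cayley hexagon and to pin down the ambient polar space as $O(7,q)$. The partition $Y=Y_1\sqcup Y_2$, the partial-spread statement, and the exclusion of triangles, quadrangles, and pentagons are comparatively routine once the lemmas of this section are in place.
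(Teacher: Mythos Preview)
Your overall architecture matches the paper's: define $Y_1$, $Y_2$ via the plane types, use Lemma~\ref{lem:V02_pl_meets} for the partition and partial spread, verify the geometry of $Y_2$ is full and flat of order $(q,q)$, exclude short cycles, and invoke~\cite{TVM1998}. One part of your argument is genuinely nicer than the paper's: your uniform exclusion of $k$-gons for $3\le k\le 5$, by noting that the vertices $P_1,\dots,P_k$ are pairwise at cyclic distance at most~$2$, hence pairwise perpendicular, hence span a single generator $\tau$ which is then forced to be a $(q{+}1)$-plane carrying a pencil. The paper instead handles the three cases separately, reducing the pentagon to the quadrangle via an auxiliary point; your argument is shorter and more conceptual.

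There is, however, a real gap exactly where you flag it. The result of~\cite{TVM1998} classifies full flat embeddings of the split Cayley hexagon in \emph{projective} space: it tells you that $Y_2$ spans a subspace carrying the natural embedding in $O(7,q)$ (or in $\Sp(6,q)$ for $q$ even). It does \emph{not} by itself force the ambient polar space $\cP$ to coincide with that span. For $\cP=\Sp(6,q)$ with $q$ odd your conclusion is fine, since the ambient $\PG(5,q)$ admits only the $q$-even embedding. But for $e\in\{3/2,2\}$ the hexagon could a~priori sit inside an $O(7,q)$ embedded in the larger $\cP$, and you give no reason why this is impossible. The paper supplies the missing arguments: for $\cP=U(7,q)$ there is simply no embedded $O(7,q)$; for $\cP=O^-(8,q)$, every generator of $\cP$ meets an embedded $O(7,q)$ in at least a line, and since the hexagon covers all points of that $O(7,q)$ while lines of $Y_1$ and $Y_2$ are disjoint, any $(q^2{+}q{+}1)$-plane would contain a point lying on lines of both parts --- impossible --- so $Y_1=\emptyset$ and $|Y_2|=(q^4{+}1)(q^2{+}q{+}1)>(q^3{+}1)(q^2{+}q{+}1)$, contradicting equality. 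Adding these two sentences closes the gap.
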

\begin{proof}
 Let $L \in Y_1$ if $L \in Y$ and $L$ lies in a $(q^2+q+1)$-plane.
 Let $L \in Y_2$ if $L \in Y$ and $L$ lies in a $(q+1)$-plane.
 Lemma~\ref{lem:V02_pl_meets} shows that this is a partition of $Y$,
 that (a) holds, and that the $(q^2+q+1)$-planes form a partial spread $\cS$.

 \smallskip

 By Lemma~\ref{lem:V02_pl_meets}, if $e \in \{ 0, 1/2\}$, then $Y_2$
 is empty. Hence, $\cS$ is a spread, but it is well-known
 that $O^+(6, q)$ and $U(6, q)$ do not possess spreads, cf.~\cite{DBKM2011}.
 Hence, $Y$ does not exist.

 \medskip

 For (c), we already saw that each point on a line of $Y_2$
 lies in precisely $q+1$ lines of $Y_2$ in Lemma~\ref{lem:hex_pt}.
 Hence, $Y_2$ is the natural embedding of a point-line geometry
 of order $(q,q)$ in $\cP$
 (and $\cP$ is then naturally embedded in projective geometry
 of order $q$ of dimension $5$, $6$, or $7$,
 or, in the language used here, a vector space of dimension
 $6$, $7$, or $8$ over the finite field with $q$ elements).
 Also note that the embedding is flat.

 \smallskip

 We want to show that $Y_2$ does not contain triangles,
 quadrangles or pentagons. For this we will repeatedly
 use that $\< L, M\>$ is a plane of $\cP$ if $L, M \in Y_2$ meet.

 There is no triangle in $Y_2$: Suppose that $L_1, L_2, L_3 \in Y_2$
 is a triangle. Then $\< L_1, L_2, L_3 \>$ is a plane $\Pi$ of $\cP$.
 But then $\Pi$ is a $(q^2+q+1)$-plane.

 There is no quadrangle in $Y_2$: Suppose that $L_1, L_2, L_3, L_4 \in Y_2$
 is a quadrangle. Then the plane $\Pi := \<L_1, L_2\>$ meets the plane $\< L_3, L_4\>$
 in the line $\< L_1 \cap L_4, L_2 \cap L_3\>$. But this contradicts
 Lemma~\ref{lem:V02_pl_meets}.

 There is no pentagon in $Y_2$: Suppose that $L_1, L_2, L_3, L_4, L_5 \in Y_2$
 is a pentagon.
 Then $L_1^\perp$ contains $P_1 := L_2 \cap L_3 \subseteq \< L_1, L_2 \>$
 and $P_2 := L_4 \cap L_5 \subseteq \< L_5, L_1\>$.
 But then $L_1^\perp$ contains the line $\< P_1, P_2 \>$ of the $(q+1)$-plane $\Sigma := \< L_3, L_4\>$.
 Hence, $L_1$ contains a point $Q$ of $\Sigma$.
 Then, assuming that
 $Q \neq P_1$, $L_1, L_2, L_3, \< L_3 \cap L_4, Q \>$ is a quadrangle.

 Counting the number of lines of $Y_2$ by their distance to
 a fixed point on some line of $Y_2$ shows that $|Y_2| \geq q+1 + q^2(q+1) + q^4(q+1) = (q^3+1)(q^2+q+1)$.
 Then the main result of~\cite{TVM1998} shows that $Y_2$ is the line set
 of the natural embedding of the Split Cayley hexagon in $\Sp(6, q)$, $q$ even,
 or in $O(7, q)$.

 We still need to rule out $e \in \{ 3/2, 2 \}$ in case of equality.
 If $\cP = U(7, q)$, then there exists no
 natural embedding of $O(7, q)$ in $U(7, q)$.
 If $\cP = O^-(8, q)$, then any plane will
 meet a naturally embedded $O(7, q)$ in a line, so $Y_1$ must be empty.
 Hence, $|Y_2| > (q^3+1)(q^2+q+1)$.
\end{proof}

Note that the main results in either~\cite{Ihringer2014} or~\cite{TVM2008}
characterize $Y_2$ as well except for $\cP = \Sp(6, q)$. We are unaware
of a result which covers the symplectic case.

\begin{Question}
 If $e \in \{ 3/2, 2\}$, can $Y_2$ be nonempty?
\end{Question}

\section{Divisibility Conditions}

Lemma~\ref{lem:reg_set_int_numbers} implies the following,
which is very helpful for divisibility conditions of regular sets.

\begin{Lemma}\label{lem:gen_div_cond_P}
 Let $Y$ be a regular set in a $d$-class association scheme
 with $n$ vertices and eigenspaces $V_0 = \< \bm{j}\>$, $V_1$, $\ldots$, $V_d$
 such that $\chi_Y \in \< \bm{j} \> + V_j$.
 Then $|Y| \cdot \frac{P_{0i}-P_{ji}}{n}$ is an integer for each relation $R_{i}$.
\end{Lemma}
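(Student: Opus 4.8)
The statement is an immediate consequence of Lemma~\ref{lem:reg_set_int_numbers}, so the plan is to unwind that lemma and observe that the quantities appearing there must be nonnegative integers, being cardinalities of certain sets.

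First I would recall the setup: $Y$ is a regular set with $\chi_Y \in \<\bm{j}\> + V_j$, and $n$ denotes the number of vertices. Fix any relation $R_i$ and any vertex $x$. Then $|\{y \in Y \mid (x,y) \in R_i\}|$ is a cardinality, hence a nonnegative integer. By Lemma~\ref{lem:reg_set_int_numbers}, this cardinality equals $|Y|\frac{P_{0i} - P_{ji}}{n} + P_{ji}$ when $x \in Y$, and equals $|Y|\frac{P_{0i} - P_{ji}}{n}$ when $x \notin Y$. Since $P$ is the eigenvalue matrix of the scheme, the entries $P_{0i}$ and $P_{ji}$ are themselves integers (indeed $P_{0i}$ is the valency $k_i$ of $R_i$, and the $P_{ji}$ are algebraic integers which are rational, hence integers). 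Thus in the case $x \notin Y$ we directly read off that $|Y|\frac{P_{0i} - P_{ji}}{n}$ is an integer.

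The only gap in this argument is the existence of a vertex $x \notin Y$; if $Y$ were all of $\cL$ the second case would be vacuous. But in that case $\chi_Y = \bm{j}$, so $\chi_Y \in \<\bm{j}\>$ and we may take $j = 0$ (or note that the conclusion is trivial since $P_{0i} - P_{0i} = 0$); alternatively one observes that if $Y = \cL$ then $|Y| = n$ and $|Y|\frac{P_{0i} - P_{ji}}{n} = P_{0i} - P_{ji} \in \Z$ regardless. Similarly if $Y = \emptyset$ the claim is trivial. So in all nontrivial cases there is a vertex outside $Y$, and the argument above applies.

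I do not expect any real obstacle here; the lemma is a one-line corollary of Lemma~\ref{lem:reg_set_int_numbers}. The one point worth stating carefully is that the $P_{ji}$ are integers rather than merely algebraic numbers — this follows because they are eigenvalues of an integer adjacency matrix that are also rational (each $P_{ji}$ is an eigenvalue of $A_i$, which has integer entries, and for the schemes under consideration, as visible from~\eqref{Pmat}, all entries of $P$ are integers). With that in hand, taking $x \notin Y$ in Lemma~\ref{lem:reg_set_int_numbers} yields the claim for every relation $R_i$.
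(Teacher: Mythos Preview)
Your proposal is correct and follows exactly the paper's approach: the paper does not give a separate proof of this lemma at all, it simply introduces it with the sentence ``Lemma~\ref{lem:reg_set_int_numbers} implies the following,'' so your unwinding of that lemma is precisely what is intended. One small remark: the detour through integrality of the individual entries $P_{ji}$ is unnecessary, since taking $x \notin Y$ already exhibits $|Y|\frac{P_{0i}-P_{ji}}{n}$ directly as a cardinality; your handling of the degenerate cases $Y = \emptyset$ and $Y = \cL$ is more careful than the paper bothers to be.
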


Let us summarize all divisibility conditions.

\begin{Proposition}\label{prop:nothing_in_V21}
 Let $Y$ be a family of vertices in an association scheme
 with $P$-matrix~\eqref{Pmat}, where $e \in \{ 0, 1/2, 1, 3/2, 2 \}$,
 such that $\chi_Y \in \< \bm{j} \> + V_{ij}$. Then there exists an
 integer $m$ such that the following holds:
 \begin{enumerate}[(a)]
  \item If $ij = 10$, then $|Y| = m(q^{e+1}+1)(q^2+q+1)$ with $m \neq 1, q^{e+2}$.
  \item If $ij = 11$, then $|Y| = m(q^{e+1}+1)(q^{e+2}+1)$.
  \item If $ij = 20$, then:
  \begin{enumerate}[(i)]
   \item If $e\neq 1$ and $q$ even, then $|Y| = m(q^2+q+1)(q^{e+2}+1)$.
   \item If $e \neq 1$ and $q$ odd, then $|Y| = m(q^2+q+1)(q^{e+2}+1)/2$ with $m \neq 1, 2q^{e+2}+1$.
   \item If $e=1$, then $|Y| = m(q^4+q^2+1)$ with $m \in \{0\} \cup [q+1, q^2(q+1)] \cup \{ (q^2+1)(q+1) \}$.
  \end{enumerate}
  \item If $ij = 21$, then $|Y| = 0$ or $|Y| = n$.
 \end{enumerate}
\end{Proposition}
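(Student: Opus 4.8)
The plan is to read off each statement from three ingredients: the divisibility conditions of Lemma~\ref{lem:gen_div_cond_P} (equivalently the integrality in Lemma~\ref{lem:reg_set_int_numbers}); the geometric divisibility lemmas of Section~\ref{sec:genexs}; and, where these do not suffice, non-negativity of the inner distribution applied both to $Y$ and to its complement $\overline{Y}$ (note $\chi_{\overline{Y}}=\bm{j}-\chi_Y\in\<\bm{j}\>+V_{ij}$ as well). I write $P_{0,k}$ for the $(\<\bm{j}\>,R_k)$-entry of $P$ and $P_{ij,k}$ for the $(V_{ij},R_k)$-entry. For (a), take $j=10$ in Lemma~\ref{lem:gen_div_cond_P}: a direct computation in~\eqref{Pmat} gives $P_{0,10}-P_{10,10}=q^{e+2}+1$, hence $|Y|\cdot\frac{q^{e+2}+1}{n}=\frac{|Y|}{(q^{e+1}+1)(q^2+q+1)}\in\mathbb{Z}$ and $|Y|=m(q^{e+1}+1)(q^2+q+1)$. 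If $m=1$ this is the size excluded by Lemma~\ref{lem:noekrratio}; if $m=q^{e+2}$ then $|\overline{Y}|=n-|Y|=(q^{e+1}+1)(q^2+q+1)$ with $\chi_{\overline{Y}}\in\<\bm{j}\>+V_{10}$, again contradicting Lemma~\ref{lem:noekrratio}. For (b), $\chi_Y\in\<\bm{j}\>+V_{11}\subseteq\<\bm{j}\>+V_{11}+V_{21}$, so Theorem~\ref{thm:cdplanes} applies verbatim and yields $|Y|=m(q^{e+1}+1)(q^{e+2}+1)$.

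For (c), $\chi_Y\in\<\bm{j}\>+V_{20}$ forces $\chi_Y\in(V_{10}+V_{11})^{\perp}$, so Lemma~\ref{lem:pointpencil_div} gives exactly the stated divisibility in (i)--(iii), the trichotomy matching the parity of $q$ and whether $e=1$. For the extra restrictions on $m$: by Lemma~\ref{lem:reg_set_int_numbers} with $j=20$, for $x\in Y$ the number of $y\in Y$ with $(x,y)\in R_{11}$ equals $a_{11}=\frac{|Y|}{n}P_{0,11}+\bigl(1-\frac{|Y|}{n}\bigr)P_{20,11}$, and since $P_{0,11}=q^{e+2}(q+1)>0$ while $P_{20,11}=-q(q+1)<0$, the inequality $a_{11}\ge 0$ rearranges to $|Y|\ge\frac{n}{q^{e+1}+1}=(q^{e+2}+1)(q^2+q+1)$ whenever $Y\neq\emptyset$. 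Applying the same bound to $\overline{Y}$ gives $|Y|\le n-(q^{e+2}+1)(q^2+q+1)$ whenever $Y\neq\cL$. Feeding these two inequalities into the parametrisations of (ii) and (iii) removes precisely the exceptional values of $m$ and carves out the stated interval in (iii); in (i) they give no restriction beyond $0\le|Y|\le n$.

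For (d), $\chi_Y\in\<\bm{j}\>+V_{21}\subseteq\<\bm{j}\>+V_{11}+V_{21}$, so Theorem~\ref{thm:cdplanes} gives $|Y|=m'(q^{e+1}+1)(q^{e+2}+1)$ with $0\le m'\le q^2+q+1$, and $|Y|\in\{0,n\}$ is equivalent to $m'\in\{0,q^2+q+1\}$. Now apply Lemma~\ref{lem:reg_set_int_numbers} with $j=21$ to $R_{21}$: since $P_{0,21}-P_{21,21}=q^{2e+5}+q^{e+1}=q^{e+1}(q^{e+4}+1)$ and $\gcd(q^{e+1},n)=1$, one gets $(q^2+q+1)\mid m'(q^{e+4}+1)$, and using $q^{3}\equiv 1\pmod{q^2+q+1}$ this becomes $(q^2+q+1)\mid m'(q^{e+1}+1)$. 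Hence $\frac{q^2+q+1}{g}\mid m'$ with $g=\gcd(q^2+q+1,\,q^{e+1}+1)$. For $e\in\{0,1,2\}$ one checks $g=1$ (for $e=2$ because $q^3+1\equiv 2$ and $q^2+q+1$ is odd), and also $g=1$ when $e=\tfrac32$ and $q\not\equiv 1\pmod 3$; in each such case $m'\le q^2+q+1$ forces $m'\in\{0,q^2+q+1\}$. The remaining cases are the Hermitian ones, $e=\tfrac12$ for all admissible $q$ and $e=\tfrac32$ with $q\equiv 1\pmod 3$, where $g>1$; here one must supplement the above with Lemma~\ref{lem:plane_div}, Lemma~\ref{lem:pointpencil_div}, and (for $e=\tfrac12$) Lemma~\ref{lem:subrank2_div}, together with the non-negativity/complement argument of part (c), to pin $m'$ down to $\{0,q^2+q+1\}$.

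The main obstacle is exactly this last point: the clean gcd computation collapses the candidate sizes to $\{0,n\}$ only when $q^2+q+1$ and $q^{e+1}+1$ are coprime, which fails in the Hermitian cases, and there each single divisibility condition leaves a genuine residual interval of sizes; eliminating it requires simultaneously exploiting several Section~\ref{sec:genexs} intersection numbers (against planes, point-pencils, and embedded generalized quadrangles) alongside the inner-distribution inequalities, which is the delicate part of the argument. Parts (a)--(c) should go through essentially as sketched.
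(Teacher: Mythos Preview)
Your treatment of (a)--(c) is correct and mirrors the paper's proof: the paper likewise appeals to Lemma~\ref{lem:gen_div_cond_P} on relation $R_{10}$ for (a) and then to Lemma~\ref{lem:noekrratio} and the complement, to Lemma~\ref{lem:plane_div} for (b), and to Lemma~\ref{lem:pointpencil_div} together with the nonnegativity of the $R_{11}$-entry of the inner distribution (and its complement) for (c). Your citation of Theorem~\ref{thm:cdplanes} in (b) is equivalent to the paper's Lemma~\ref{lem:plane_div}.

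For (d) the paper's entire argument is the sentence ``Lemma~\ref{lem:plane_div} and Lemma~\ref{lem:pointpencil_div} combined show the case $ij=21$.'' Your route via $R_{21}$ in Lemma~\ref{lem:gen_div_cond_P} produces exactly the same arithmetic constraint $(q^2+q+1)\mid m'(q^{e+1}+1)$ that the paper's combination yields, and your gcd analysis correctly disposes of $e\in\{0,1,2\}$ and of $e=\tfrac32$ when $\gcd(\sqrt q+1,3)=1$. You are also right that the Hermitian cases are \emph{not} closed by this gcd alone; the paper's one-liner has the same defect, it simply does not flag it.

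The genuine gap is in your proposed remedy. Take $e=\tfrac12$ and write $r=\sqrt q$. Every supplementary condition you list --- Lemma~\ref{lem:plane_div}, Lemma~\ref{lem:pointpencil_div}, Lemma~\ref{lem:subrank2_div}, the per-point count from Theorem~\ref{cor:char_const_point}, and each inequality $a_k\ge 0$ together with its complement version --- collapses to the \emph{same} divisibility $(r^2+r+1)\mid m'$ and to the trivial range $1\le m'\le q^2+q$; no combination of them gives anything stronger. Concretely, in $U(6,4)$ the value $m'=7$ survives all of these checks, with integral nonnegative inner distribution $(1,18,52,648,1360)$. Similarly, for $e=\tfrac32$ with $\sqrt q\equiv 2\pmod 3$ one is left with $m'\in\{0,(q^2+q+1)/3,2(q^2+q+1)/3,q^2+q+1\}$, and none of the listed tools eliminates the middle two values. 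So the ``delicate part'' cannot be carried out with the Section~\ref{sec:genexs} ingredients you name; some genuinely new input is needed here, and the paper does not supply one either.
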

\begin{proof}
 We use Lemma~\ref{lem:gen_div_cond_P} for the case $ij=10$ with
 \[
  \frac{P_{00,10} - P_{10,10}}{n}.
 \]
 We find that $|Y| \cdot \frac{1}{(q^2+q+1)(q^{e+1}+1)}$ is an integer.
 Lemma~\ref{lem:noekrratio} rules out $m=1$.
 The case $m=q^{2e}$ follows by considering the complement of $Y$.

 Lemma~\ref{lem:plane_div} shows the case $ij=11$.

 Lemma~\ref{lem:pointpencil_div} shows the divisibility condition in case $ij=20$.
 For the additional conditions on $m$, by Lemma~\ref{lem:reg_set_int_numbers}
 the number of elements of $Y$ in relation $R_{11}$ to some fixed element of $Y$ is
 \[
  0 \leq |Y| \frac{P_{00,11}}{n} + P_{20,11} =
  \begin{cases}
      m \cdot \frac{q^2+q}{2} - q^2-q & \text{ if } e\neq 1 \text{ and } q \text{ odd,}\\
      mq - q^2-q & \text{ if } e=1.
  \end{cases}
 \]
 This excludes $m=1$ for $e \neq 1$ and $q$ odd, respectively, $m \in [1, q]$ for $e=1$.
 The remaining cases follow by considering the complement of $Y$.

 Lemma~\ref{lem:plane_div} and Lemma~\ref{lem:pointpencil_div} combined
 show the case $ij=21$.
%
%
\end{proof}

\section{Extremal Sets with Forbidden Relations}

The famous ratio bound (also called Delsarte-Hoffman bound) states that
a coclique $Y$ in a regular graph on $n$ vertices, degree $k$,
and smallest eigenvalue $\lambda$ has at most size $n/(1-k/\lambda)$.
In our case, we can apply it to cocliques in any graph defined by a relation
(or union of relations) in our association scheme.
If $|Y| = n/(1-k/\lambda)$, then
$\chi_Y \in \< \bm{j}\> + W$, where $W$ is the eigenspace of $\lambda$ (for instance see Proposition 3.7.2 in~\cite{BCN}).
Now consider the graph $\Gamma$ with adjacency defined as the union
of relations $R_i$ with $i \in \cI$ for some subset of relations $\cI$.
Then, more generally, Delsarte's linear programming bound
(see Proposition 2.5.2 in~\cite{BCN})
states that if $Y$ has inner distribution $a$, then
\[
  {(aQ)}_j \geq 0
\]
for all $j$, where $j$ runs over the indices of the eigenspaces $E_j$.
If we set $a_i = 0$ for all $i \in \cI$
and we maximize for $\sum a_i$,
then this gives us a bound on the size of a coclique in $\Gamma$.
Schrijver showed that a weighted version of the Delsarte-Hoffman
bound, also known as Lov\'asz bound, is equivalent to Delsarte's LP bound~\cite{Schrijver1979}.
Let $\cJ$ be the set of indices of those eigenspaces $V_j$ for which
${(aQ)}_j = 0$ for all optimal solution of the linear program stated in the above.
Then an optimal solution satisfies $\chi_Y \in \< \bm{j} \> + W$, where $W = \bigcup_{j \not\in \cJ} V_j$.

Let us briefly discuss the possible families of lines for which some
subset of relations is forbidden. Also note that we have a constant
number of variables, a constant number of inequalities,
and that the solution of the linear program is necessarily a polynomial in $q$.
Hence, determining the optimal solution for each case is a (small)
finite problem and can done relatively quickly by computer.


\bigskip
\noindent
Relation $R_{10}$ forbidden: For a family of lines $Y$ in $\cP$ with no two in a plane,
the ratio bound shows $|Y| \leq (q^{e+1}+1)(q^{e+2}+1)$ with equality if
$\chi_Y \in \< \bm{j}\> + V_{11} + V_{21}$. For $e \in \{ 0, 1/2, 1\}$,
generalized quadrangles are examples, see \S\ref{subsec:gq}.\
Also disallowing $R_{20}$ in addition to $R_{10}$ does not change the bound.

\medskip
\noindent
Relation $R_{11}$ forbidden: For a family of lines $Y$ in $\cP$ with any two either disjoint
or in a common plane, the ratio bound shows $|Y| \leq (q^{e+2}+1)(q^2+q+1)$
with equality if $\chi_Y \in \< \bm{j}\> + V_{20}$, see~\S\ref{subsec:spreads}
and~\S\ref{subsec:gh}.

\medskip
\noindent
Relation $R_{20}$ forbidden: For a family of lines $Y$ in $\cP$ with any two either meeting in
a point or opposite, if $e \geq 1/2$ or $(e,q) = (0,2)$,
then the ratio bound shows $|Y| \leq (q^{e+1}+1)(q^{e+2}+1)$
with equality if $\chi_Y \in \< \bm{j}\> + V_{11}$. We saw in~\S\ref{subsec:gq},
that equality is obtained if and only if $\cP = O^+(6, 2)$,
$\cP = U(6, q)$, $\cP = O(7, q)$, or $\cP = \Sp(6, q)$ with $q$ even.
For $e=0$ and $q \geq 3$, Delsarte's LP bound is better and yields
precisely the same bound as when $R_{11}$ and $R_{20}$ are forbidden,
see below.

\medskip
\noindent
Relation $R_{21}$ forbidden: The largest families of lines $Y$ in $\cP$ with no two opposite were discussed in~\S\ref{subsec:ekr}.


\medskip
\noindent
Relations $R_{10}, R_{11}$ forbidden: For a family of lines $Y$
without intersecting lines, Delsarte's LP bound yields $|Y| \leq \frac{(q^2+q+1)(q^{e+2}+1)}{q+1}$
with equality only if $\chi_Y \in \< \bm{j}\> + V_{20} + V_{21}$.
Equality occurs only if $Y$ is a \textit{line spread} of $\cP$.
Clearly this can only happen if $e=1$; these are known to exist for small $q$,
but in general existence seems open, see Remark 8 in~\cite{MDW2020}.


\medskip
\noindent
Relations $R_{10}, R_{21}$ forbidden: Consider a family of lines $Y$
with no two lines in a plane and no two lines opposite.
If $(e, q) = (0, 2)$ or $e > 0$,
then Delsarte's LP bound yields
$|Y| \leq \frac{((q^2+q+1)(q^{e+1}+1)(q^{e+1}+2q-1)}{q^{e+2}+q^3+q-1}$
and is never an integer.
If $e = 0$ and $q >2$, then Delsarte's LP bound yields
$|Y| \leq \frac{2q^2+q-1}{q-1}$ and is only an integer for $q=3$.
Equality occurs only if $\chi_Y \in V_{20}^\perp$.

\medskip
\noindent
Relations $R_{11}, R_{20}$ forbidden: For a family of lines $Y$ in $\cP$ with any two either in a common plane
or opposite, we have that Delsarte's LP bound yields $|Y| \leq \allowbreak \frac{(q^{e+2}+1)(q^{e+2}+q^3+q-1)}{q^{e+1}+2q-1}$
with equality only if $\chi_Y \in \< \bm{j}\> + V_{11} + V_{20}$.
The upper bound is only an integer for $(e, q) \in \{ (0, 2), (0, 5), (0, 7), (1, 3) \}$.

\medskip
\noindent
Relations $R_{11}, R_{21}$ forbidden: Delsarte's LP bound yields $|Y| \leq q^2+q+1$
with equality only if $\chi_Y \in V_{11}^\perp$. See \S\ref{subsec:planes}.
Disallowing one more relation does not change the bound.

\medskip
\noindent
Relations $R_{20}, R_{21}$ forbidden: This corresponds to EKR sets of pairwise intersecting lines
and has been solved in \cite{Metsch2016}.


\medskip
\noindent
Relations $R_{10}, R_{11}, R_{20}$ forbidden: This corresponds to \S\ref{subsec:systems}.


\medskip
\noindent
Relations $R_{10}, R_{20}, R_{21}$ forbidden: Delsarte's LP bound shows $|Y| \leq q^{e+1}+1$
with equality only if $\chi_Y \in V_{20}^\perp$. This bound is tight as an ovoid
in the quotient of a point $P$ has size $q^{e+1}+1$.


\begin{Problem}
 Determine the largest families in all of the cases above.
\end{Problem}

\section{Two-weight Codes \& SRGs}

It was noted in~\cite{ST1994} (among other things) that for
$\cP \in \{ \Sp(2d, q), U(2d+1, q), O^-(2d+2, q) \}$ naturally embedded
in an ambient ($\Ff_q$) vector space $V$,
the collection of points lying on an element points of an $1$-system give
a two-weight code, that is, the hyperplanes of $V$ have two intersection sizes
with respect to this set of points.
Similar observations can be found in~\cite{BKLP2007} for the strongly
regular graph of the points of $\cP$.
Here we discuss these results more generally, in the context of rank 3 polar spaces.
Note that in the general result $m$ is not necessarily an
integer for $e \neq 2$, but we chose this parameterization for the
sake of readability.

\begin{Theorem}\label{thm:twoweight}
 Let $Y$ be a family of pairwise non-intersecting lines in
 $\cP \in \{ \Sp(6, q), U(7, q), O^-(8, q) \}$
 with $\chi_Y \in V_{10}^\perp$.
 Write $|Y| = m(q^{e+2}+1)$ and let $H$ be a hyperplane of $V$.
 If $H$ is a nondegenerate hyperplane or a degenerate hyperplane
 with its radical not in $Y$, then $H$ contains precisely
 \[
  m(q+1)(q^{e+1}+1)
 \]
 points which lie on a line of $Y$, while if $H$ is a degenerate hyperplane with its radical in $Y$,
 then $H$ contains precisely
 \[
  m(q+1)(q^{e+1}+1) - q^{e+1}
 \]
 points which lie on a line of $Y$.
\end{Theorem}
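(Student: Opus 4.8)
The plan is to count incidences between the points covered by lines of $Y$ and a fixed hyperplane $H$, exploiting the fact that $\chi_Y \in V_{10}^\perp$ translates, via Lemma~\ref{lem:pointpencil_div2} and its underlying computation, into strong control over how $Y$ meets the point-pencils and line-sets attached to any point $P$. Let $\mathcal{C}(Y)$ denote the set of points of $\cP$ lying on some line of $Y$; since the lines of $Y$ are pairwise non-intersecting, $|\mathcal{C}(Y)| = (q+1)|Y| = m(q+1)(q^{e+2}+1)$, and each such point lies on a unique line of $Y$. A hyperplane $H$ of the ambient vector space $V$ meets $\cP$ in either a non-degenerate polar space of rank $3$ with parameter $e-1$ (if $H$ is non-degenerate), or in a cone $P^\perp$ over a rank $2$ polar space (if $H = P^\perp$ is degenerate with radical $P$). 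In the first case one counts points of $\mathcal{C}(Y)$ in $H$ by summing, over all lines $L \in Y$, the quantity $|L \cap H|$, which is $q+1$ if $L \subseteq H$ and $1$ otherwise; so the count equals $|Y| + q|\{L \in Y : L \subseteq H\}|$, and one needs $|\{L \in Y : L \subseteq H\}| = m(q^{e+1}+1) - m(q^{e+2}+1)/(q+1) \cdot$ (appropriate factor)—more cleanly, one uses that the line set of the embedded rank $3$ polar space $H \cap \cP$ lies in $\<\bm{j}\> + V_{10}$ (see~\S\ref{subsec:subrank3}), hence is orthogonal to $\chi_Y - \frac{|Y|}{n}\bm{j}$, which pins down $|\{L \in Y : L \subseteq H\}|$ exactly.

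The cleanest route, and the one I would actually carry out, is to avoid the two separate incidence counts by observing that for \emph{every} hyperplane $H$ the set of lines of $\cP$ contained in $H$ has characteristic vector in a controlled combination of eigenspaces: for $H$ non-degenerate this is $\<\bm{j}\> + V_{10}$ by~\S\ref{subsec:subrank3}, while for $H = P^\perp$ the lines in $H$ split as (lines through $P$) $\cup$ (lines of $P^\perp$ not through $P$), and we showed in~\S\ref{subsec:ptpen} that $\chi_{\tilde Y} = (q^e+1)\chi_{Y_P} + \chi_{Y'_P} \in \<\bm{j}\> + V_{10}$ is exactly the characteristic vector of all lines in $P^\perp$. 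Thus in \emph{all} cases the characteristic vector $\chi_{H\text{-lines}}$ of the lines contained in $H$ decomposes as $c_H \bm{j} + w_H$ with $w_H \in V_{10}$, where $c_H = |H\text{-lines}|/n$. Since $\chi_Y \in V_{10}^\perp$, we get $\chi_Y^T \chi_{H\text{-lines}} = \frac{|Y|}{n}|H\text{-lines}|$, i.e.\ $|\{L \in Y : L \subseteq H\}| = \frac{|Y|}{n}|H\text{-lines}|$.

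It then remains to assemble the point count. A point $R \in \mathcal{C}(Y)$ lies in $H$ iff $R \in L \cap H$ for its unique line $L \ni R$ of $Y$. So
\[
 |\mathcal{C}(Y) \cap H| = \sum_{L \in Y} |L \cap H| = q\,|\{L \in Y : L \subseteq H\}| + |\{L \in Y : L \not\subseteq H, L \cap H \neq \emptyset\}|.
\]
For a non-degenerate $H$ every line of $\cP$ meets $H$, so the second term is $|Y| - |\{L \subseteq H\}|$ and the total is $|Y| + (q-1)|\{L \subseteq H\}|$; substituting the values $|Y| = m(q^{e+2}+1)$, $n = (q^{e+1}+1)(q^{e+2}+1)(q^2+q+1)$, and $|H\text{-lines}| = (q^{e}+1)(q^{e+1}+1)(q^2+q+1)$ (the number of lines in a rank $3$ polar space with parameter $e-1$) and simplifying gives $m(q+1)(q^{e+1}+1)$. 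For $H = P^\perp$ with $P \notin \mathcal{C}(Y)$: a line $L \in Y$ either lies in $P^\perp$, or meets $P^\perp$ in a point (it cannot be that $L \cap P^\perp = \emptyset$, since $L \not\ni P$ forces $L$ to meet $P^\perp$ in the point $L \cap P^\perp$ of dimension at least $1$ unless $L \subseteq P^\perp$)—so again every line meets $H$, and the identical substitution using $|H\text{-lines}| = (q^e+1)(q^{e+1}+1)(q^2+q+1)$ (the number of lines in $P^\perp$, a cone over a GQ of order $(q,q^e)$, which works out to the same polynomial) yields $m(q+1)(q^{e+1}+1)$. For $H = P^\perp$ with $P \in \mathcal{C}(Y)$, exactly the same computation applies but the unique line $L_0 \in Y$ through $P$ now lies in $P^\perp$, and the $q+1$ points of $L_0$ are being counted; however $P$ itself is \emph{not} a point of $\cP$ we should subtract—rather the discrepancy $-q^{e+1}$ arises because the lines of $Y$ through points of $L_0 \setminus \{P\}$ are double-counted relative to the other two cases, and $q^{e+1}$ is precisely $|\{L \subseteq H\}|$ in this regime minus in the generic regime. \textbf{The main obstacle} is getting this last bookkeeping exactly right: one must carefully track, when $P \in \mathcal{C}(Y)$, that $\chi_{\tilde Y}$ as in~\S\ref{subsec:ptpen} evaluated against $\chi_Y$ picks up an extra contribution from the line $L_0 \in Y$ with $L_0 \ni P$, shifting $|\{L \in Y : L \subseteq P^\perp\}|$ by a controlled amount and producing the corrective term $-q^{e+1}$; the cleanest way to see this is to note $P \in \mathcal{C}(Y)$ means the hypothesis of Lemma~\ref{lem:pointpencil_div2} fails, so one instead computes $\chi_{\tilde Y}^T \chi_Y$ directly using $\chi_{\tilde Y} \in \<\bm{j}\> + V_{10}$ and $\chi_Y \in V_{10}^\perp$ to get $\chi_{\tilde Y}^T\chi_Y = |\tilde Y|\,|Y|/n$, then compares with the incidence sum $\sum_{\Pi \ni P}|Y \cap \Pi|$ to extract the point count, where the single plane on $P$ containing $L_0$ behaves differently from the rest.
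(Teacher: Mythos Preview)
Your overall strategy matches the paper's: for each type of hyperplane $H$, identify a vector attached to the lines of $H$ that lies in $\langle \bm{j}\rangle + V_{10}$, pair it against $\chi_Y \in V_{10}^\perp$ via Lemma~\ref{lem:div_by_example}, and convert the resulting line count into a point count. The non-degenerate case is handled correctly, apart from an arithmetic slip: the point count is $|Y| + q\,|\{L \in Y : L \subseteq H\}|$, not $|Y| + (q-1)\,|\{L \in Y : L \subseteq H\}|$, since a contained line contributes $q+1$ points and every other line contributes $1$.

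The genuine error is in the degenerate case. You assert that $\chi_{\tilde Y} = (q^e+1)\chi_{Y_P} + \chi_{Y'_P}$ ``is exactly the characteristic vector of all lines in $P^\perp$'', and deduce that the unweighted indicator of lines in $P^\perp$ lies in $\langle \bm{j}\rangle + V_{10}$. This is false: $\chi_{\tilde Y}$ is a \emph{weighted} vector, assigning weight $q^e+1$ to each line through $P$ and weight $1$ to every other line of $P^\perp$ (equivalently, $\chi_{\tilde Y} = \sum_{\Pi \ni P}\chi_\Pi$ overcounts the lines through $P$). The unweighted indicator $\chi_{Y_P} + \chi_{Y'_P}$ has a nonzero $V_{11}$-component, because $\chi_{Y_P}$ does; so your formula $|\{L \in Y : L \subseteq H\}| = |Y|\cdot|H\text{-lines}|/n$ is not valid for degenerate $H$, and the value $(q^e+1)(q^{e+1}+1)(q^2+q+1)$ you quote is $\bm{j}^T\chi_{\tilde Y}$, not the number of lines in $P^\perp$.

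What \emph{is} valid, and what the paper uses, is
\[
\chi_Y^T \chi_{\tilde Y} \;=\; (q^e+1)\,\bigl|\{L \in Y : P \in L\}\bigr| \;+\; \bigl|\{L \in Y : L \subseteq P^\perp,\ P \notin L\}\bigr| \;=\; \frac{|Y|\cdot|\tilde Y|}{n} \;=\; m(q^e+1).
\]
If $P \notin \mathcal{C}(Y)$ the first term vanishes, giving $m(q^e+1)$ lines of $Y$ in $P^\perp$ and hence $m(q^{e+2}+1) + q\cdot m(q^e+1) = m(q+1)(q^{e+1}+1)$ points (so your answer there is right, but only because the spurious weight $q^e+1$ is applied to an empty set). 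If $P \in \mathcal{C}(Y)$ the first term equals $q^e+1$, so $P^\perp$ contains exactly $1 + (m-1)(q^e+1)$ lines of $Y$, and the point count becomes
\[
m(q^{e+2}+1) + q\bigl(1 + (m-1)(q^e+1)\bigr) \;=\; m(q+1)(q^{e+1}+1) - q^{e+1}.
\]
This is precisely the bookkeeping you flag as ``the main obstacle''; it resolves cleanly once you treat $\chi_{\tilde Y}$ as the weighted object it is rather than as an unweighted indicator.
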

\begin{proof}
 A hyperplane $H$ of $V$ intersects $\cP$ in a degenerate polar space
 or in a rank $3$ polar space with parameter $e-1$.
 Let $Z$ denote the lines of $H$ in $\cP$.
 We will use Lemma~\ref{lem:div_by_example}.

 We saw in~\S\ref{subsec:subrank3} that if $H$ is a rank $3$ polar space
 (that is, $H$ is nondegenerate hyperplane),
 then $\chi_Z \in \< \bm{j}\> + V_{10}$. We find
 \[
  |Z| = (q^{e}+1)(q^{e+1}+1)(q^2+q+1).
 \]
 Hence, $H$ contains $m(q^e+1)$ lines of $Y$
 while it intersects all the other lines of $Y$ in a point.

 Now let $H$ be a degenerate hyperplane with radical $H^\perp$.
 Similarly,
 we saw for the $\tilde{Y}$ in \S\ref{subsec:ptpen} that
 $\chi_{\tilde{Y}} \in \< \bm{j}\> + V_{10}$ and
 $|\chi_{\tilde{Y}}| = (q^e+1)(q^{e+1}+1)(q^2+q+1) = |Z|$.
 Hence, with $Y'$ as in \S\ref{subsec:ptpen} denoting the set
 of lines through $H^\perp$,
 \[
  \chi_Z^T \chi_{\tilde{Y}} = | Z \cap \tilde{Y} | + q^e | Z \cap Y' |.
 \]
 This shows the claims for $H$ being degenerate.
\end{proof}


Recall that a \textit{strongly regular graph} is a $k$-regular
graph of order $v$ (not complete, not edgeless) with two non-principal eigenvalues $r, s$.
Following~\cite{BvM}, we write $k \geq r \geq 0 > s$.
A classical result due to Delsarte, using the standard terminology
for strongly regular graphs of~\cite[p.~175]{BvM}, shows the following.

\begin{Corollary}\label{cor:srg}
 Let $Y$ be as in Theorem~\ref{thm:twoweight}
 with $\cP \in \{ \Sp(6, q), U(7, q), O^-(8, \allowbreak q) \}$.
 Then there exists a strongly regular graph with
 parameters
 \begin{align*}
   & v = q^{4+2e},            &  & r = m(q^2-1),    \\
   & k = m(q^{e+2}+1)(q^2-1), &  & s = r - q^{e+2}.
 \end{align*}
\end{Corollary}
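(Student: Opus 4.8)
The plan is to apply the classical theorem of Delsarte that a projective two-weight code gives rise to a strongly regular graph, using the two-weight structure established in Theorem~\ref{thm:twoweight}. First I would set up the code: let $X$ be the set of points of $V$ lying on a line of $Y$, with $|X| = |Y|(q+1) = m(q+1)(q^{e+2}+1)$ (each line has $q+1$ points, and the lines of $Y$ are pairwise non-intersecting so no point is double-counted). Here $V$ has dimension $n_0 := 4 + 2e$ as an $\Ff_q$-vector space, so the ambient projective space is $\PG(n_0 - 1, q)$. Theorem~\ref{thm:twoweight} tells us that the hyperplanes of $V$ meet $X$ in exactly two sizes: the "generic" size $w_1 = m(q+1)(q^{e+1}+1)$ (occurring for nondegenerate hyperplanes and degenerate hyperplanes whose radical is not in $Y$), and the smaller size $w_2 = m(q+1)(q^{e+1}+1) - q^{e+1}$ (occurring for degenerate hyperplanes whose radical lies in $Y$).

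Next I would invoke the standard dictionary (see e.g.~the two-weight code literature, or Calderbank--Kantor): a set $X$ of $N$ points in $\PG(k_0 - 1, q)$ such that every hyperplane meets $X$ in $w_1$ or $w_2$ points yields a strongly regular graph on $v = q^{k_0}$ vertices (the vectors of $V$), where two vectors $u, u'$ are adjacent iff $u - u'$ is a scalar multiple of a vector representing a point of $X$; equivalently the graph is a Cayley graph on $(V, +)$ with connection set the cone over $X$. Its degree is $k = N(q-1)$ and its non-principal eigenvalues are $r = w_1(q-1) - N$ and... wait, more precisely, writing the two intersection numbers as $w_1 > w_2$, the eigenvalues restricted to the non-trivial characters are $\theta_i = ( (N - w_i) q - (N - w_i \cdot \tfrac{q}{q}) )$ — I will use the clean form: the eigenvalues other than $k$ are $q w_1 - N$ and $q w_2 - N$, shifted appropriately; concretely one gets the two values differing by $q(w_1 - w_2)$. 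With $k_0 = n_0 = 4 + 2e$ this gives $v = q^{4+2e}$, matching the claim.

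Then it remains to substitute and check the arithmetic. With $N = m(q+1)(q^{e+2}+1)$ and $w_1 - w_2 = q^{e+1}$, the difference of the two non-principal eigenvalues is $q \cdot q^{e+1} = q^{e+2}$, which matches $r - s = q^{e+2}$ in the statement. Computing $k = N(q-1) = m(q+1)(q^{e+2}+1)(q-1) = m(q^2-1)(q^{e+2}+1)$, which is exactly the stated $k$. For $r$ one computes $r = q w_1 - N = q m(q+1)(q^{e+1}+1) - m(q+1)(q^{e+2}+1) = m(q+1)\big( q(q^{e+1}+1) - (q^{e+2}+1) \big) = m(q+1)(q - 1) = m(q^2 - 1)$, matching; and then $s = r - q^{e+2} = m(q^2-1) - q^{e+2}$, matching. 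I would also remark that the graph is neither complete nor edgeless (since $0 < N < (q^{n_0}-1)/(q-1)$ for the relevant ranges, using $m \geq 1$ and $|Y| < n$), so it is genuinely strongly regular. The main obstacle here is essentially bookkeeping: making sure the two-weight-code-to-SRG correspondence is quoted in a normalization consistent with the sign convention $k \geq r \geq 0 > s$ of~\cite{BvM}, and confirming that the degenerate-hyperplane case of Theorem~\ref{thm:twoweight} is correctly accounting for every hyperplane (in particular that the radical-in-$Y$ case does give the only other weight, and there is no third value). Once the two-weight property is in hand, nothing deep remains — it is Delsarte's theorem plus substitution.
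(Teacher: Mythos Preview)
Your proposal is correct and follows precisely the route the paper indicates: the paper does not give a detailed argument but simply invokes ``a classical result due to Delsarte'' relating projective two-weight codes to strongly regular graphs, and you have correctly unpacked that black box. Your computation of the eigenvalues via $q\,|X\cap H| - N$ is the standard Cayley-graph/character calculation, and the substitutions $N = m(q+1)(q^{e+2}+1)$, $w_1 - w_2 = q^{e+1}$ yield exactly the stated $v$, $k$, $r$, $s$; the only cosmetic issue is the moment of hesitation about the eigenvalue formula, which you then resolve correctly.
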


We will assume that $\chi_Y \in V_{10}^\perp$
and that the inner distribution of $Y$ has the form
\[
 a = (1, 0, 0, x, y).
\]

Let us discuss all the cases with at most 1300 vertices.

\bigskip

First we discuss $\cP = \Sp(6, q)$.
We obtain a strongly regular graph with parameters
\begin{align*}
  & v = q^{6},             &  & r = m(q^2-1),        \\
  & k = m(q^{3}+1)(q^2-1), &  & s = -m(q^{3}-q^2+1).
\end{align*}
For $m=1$, we have a $1$-system
and can take a spread of $O^-(6, q)$ for this.
We obtain the affine polar graph $VO^-(6, q)$
with parameters $(v, k, r, s) = (q^6, (q^{3}+1)(q^2-1), q^2-1, -q^3+q^2-1)$,
see~\cite[\S3.3.1]{BvM}.

For $q=2$, we obtain strongly regular
graphs with parameters $(64, 9m', m',\allowbreak  m'-8)$
for some integer $m'$. Such strongly regular
graphs exist if and only if $2 \leq m' \leq 5$.

For $q=3$, we obtain strongly regular
graphs with parameters $(729, 56m', \allowbreak 2m',\allowbreak 2m'-27)$
for some integer $m'$. Such strongly regular
graphs exist if and only if $2 \leq m' \leq 12$.

\bigskip

Next we discuss $\cP = O^-(8, q)$ and $q$ even.
We obtain a strongly regular graph with parameters
\begin{align*}
  & v = q^{8},             &  & r = m(q^2-1),        \\
  & k = m(q^{4}+1)(q^2-1), &  & s = -m(q^{4}-q^2+1).
\end{align*}
For $m=1$ and $q=2$, we have a $1$-system
which would give a strongly regular graph
with parameters $(v, k, r, s) = (256, 51, 3, -13)$.
An SRG with such parameters is known, for instance
from the affine polar graph $VO^-(4, 4)$.
In general, for $q=2$ we obtain
a strongly regular graph with parameters
$(256, 51m, 3m, 3m-16)$ which are all known
to exist.

\section{Non-trivial Examples}

A \textit{nontrivial} example is
any example which we cannot reduce to an existing problem
in literature, so this is an informal term.
We have verified
for $O^+(6, 2)$, $O(7, 2)$ (and thus also for $\Sp(6,2)$), and $O^+(6, 3)$
that all regular sets are either described in
Section~\ref{sec:genexs} or disjoint unions or complements
of such examples.
Note that $O(7,2) \simeq \Sp(6,2)$ so this case is covered as well.
We have also verified that all regular sets in $\< \bm{j} \> + V_{10}$
have been described for $O^-(8, 2)$, $\Sp(6,3)$, and $O(7,3)$,
and that all regular sets in $\< \bm{j} \> + V_{11}$
have been described for $O^-(8, 2)$.

To summarize, for a rank 3 polar space $\cP$ with parameter $e$ the examples are as follows:
\begin{itemize}
 \item As shown in \S\ref{subsec:subrank3},
       if $\cP$ contains an embedded polar space $\cP'$
       of rank 3 with parameter $e-1$,
       the set of lines contained in $\cP'$
       lies in $\< \bm{j} \> + V_{10}$.
       These examples occur when $\cP$ is
       $\Sp(6,q)$ with $q$ even,
       $O(7,q)$, $U(7,q)$, or $O^-(8,q)$.
 \item As shown in \S\ref{subsec:gq},
       if $\cP$ contains an embedded generalized quadrangle $\cP'$
       with parameter $e+1$, the set of lines of $\cP'$
       lies in $\< \bm{j} \> + V_{11}$.
       Such an embedded GQ exists in $O^+(6,q)$, $U(6,q)$, and $O(7,q)$ (and so also in $\Sp(6,q)$ when $q$ is even).
 \item As shown in \S\ref{subsec:ovoids},
       if $\cP$ contains an ovoid $\cO$, we can take
       the collection of point pencils (of lines)
       through each point of $\cO$.
       This line set lies in $\< \bm{j} \> + V_{11}$.
       These examples occur in $O^+(6,q)$ for all $q$, and in $O(7,q)$ when $q = 3^h$.
 \item As shown in \S\ref{subsec:movoid},
       if $\cP$ contains an embedded GQ $\cP'$ with parameter $e+1$,
       and $\cP'$ contains an $m$-ovoid $\cO$,
       then the set of lines meeting $\cP'$ in
       precisely one point of $\cO$ lies in $\< \bm{j} \> + V_{11}$.
       This construction can be realized from known $m$-ovoids in
       $O^-(6,q) \leq O(7,q)$
       and in $O(5,q) \leq O^+(6,q)$.
 \item As shown in \S\ref{subsec:spreads},
       if $\cP$ contains a plane spread $\cS$,
       the set of all lines contained in an element of $\cS$
       lies in $\< \bm{j} \> + V_{20}$.
       Plane spreads exist in $\Sp(6,q)$ and $O^-(8,2)$,
       and thus also for $O(7,q)$ when $q$ is even.
 \item As shown in \S\ref{subsec:gh},
       if $\cP = O(7,q)$ (or $\Sp(6,q)$ with $q$ even),
       then the set of lines of an embedded split Cayley hexagon
       lies in $\< \bm{j} \> + V_{20}$.
\end{itemize}

We also obtain a few interesting examples as unions or complements of the above. Namely,
\begin{itemize}
 \item Let $g(q)$ denote the size of the largest partial partition
       of $O^+(6, q)$ into pairwise disjoint copies of $O(5, q)$.
       We find $g(2) = 7$, that is a partition.
       We find $g(3) = 7$ and $g(4) = 11$, that is $g(q) = (q^2+q+2)/2$ for $q \in \{ 3, 4\}$.
       We find $g(5) = 21$.
       Taking the lines in such a collection of disjoint copies of $O(5,q)$
       gives a line set in $\< \bm{j} \> + V_{11}$.
 \item In $O^+(6, 3)$ there exists an ovoid  $\cO$ disjoint from
       a copy of $O(5, 3)$. Hence, we can take the union of
       the lines incident with a point of $\cO$ and the lines of
       the $O(5,3)$ to get a set of lines in $\< \bm{j} \> + V_{11}$.
       It would be interesting
       to investigate when this can occur.
\end{itemize}

\begin{Problem}
 Determine $g(q)$ for all $q$.
\end{Problem}

Line sets in $\< \bm{j} \> + V_{11}$ are of particular interest,
as they can in some cases be used to construct examples of
Cameron--Liebler sets of generators~\cite{DBDR2023}.
We give here a simplified version of these results,
as they pertain to rank 3 polar spaces.

A set of lines $Y$ in a rank 3 polar space $\cP$ is called an $(m,2)$-ovoid
if every plane of $\cP$ contains precisely $m$ lines of $Y$,
that is, if $Y$ is a combinatorial design with respect to planes,
and as noted in \S\ref{subsec:comb_designs}, such a set is contained in
$\< \bm{j} \> + V_{11} + V_{21}$.
An $(m,2)$-ovoid $\cO$ is called \emph{regular} if it is a regular set of lines, that is, if it is contained in $\< \bm{j} \> + V_{11}$
(since by Proposition~\ref{prop:nothing_in_V21} there are no sets in $\< \bm{j} \> + V_{21}$).

\begin{Theorem}
 Let $\cP$ be the rank 5 polar space $O^+(10,q)$, $U(10,q)$, or $O(11,q)$
 and $\cP'$ be an embedded rank 3 polar space $O(7,q)$, $U(7,q)$, or $O^-(8,q)$, respectively,
 containing a regular $(m,2)$-ovoid $\cO$.
 Let $\cL$ be the set of generators of $\cP$ that meet $\cP'$ precisely in an element of $\cO$.
 Then $\cL$ is a Cameron--Liebler set of generators in $\cP$ with parameter $mq^{e+1}(q-1)$.
\end{Theorem}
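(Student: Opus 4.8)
The plan is to verify that $\chi_{\cL}$ lies in $\< \bm{j} \> + V_1$, where $V_1$ is the eigenspace of the dual polar graph of $\cP$ on its generators belonging to the second largest eigenvalue --- equivalently, the span of the characteristic vectors of the point-pencils $[P]=\{\pi\text{ a generator of }\cP : P\in\pi\}$; this is precisely what it means for $\cL$ to be a Cameron--Liebler set of generators, and the parameter then drops out of a count of $|\cL|$. First I would fix the geometry: realize $\cP$ on a vector space $V$ and $\cP'$ on a non-degenerate subspace $W$, so that $\dim V-\dim W=3$, $V=W\perp W^{\perp}$, and $W^{\perp}$ carries a rank $1$ polar space. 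Since $\dim\pi+\dim W>\dim V$ for every generator $\pi$ of $\cP$, while $\pi\cap W$ is totally isotropic in $W$ and hence of dimension at most $3$, we get $\dim(\pi\cap W)\in\{2,3\}$ for every generator $\pi$: it meets $\cP'$ in a line or in a plane of $\cP'$. Thus $\cL$ is exactly the set of generators of $\cP$ meeting $\cP'$ in a line of $\cO$, and if $A$ denotes the $\{0,1\}$-matrix with rows indexed by the generators of $\cP$, columns by the lines of $\cP'$, and $A_{\pi,\ell}=1$ iff $\pi\cap W=\ell$, then $A\chi_{\cO}=\chi_{\cL}$.

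The core step is to show that $A$ maps $\< \bm{j} \> + V_{11}$ (in the line scheme of $\cP'$) into $\< \bm{j} \> + V_1$ (in the generator scheme of $\cP$); together with $\chi_{\cO}\in\< \bm{j} \> + V_{11}$, which holds by definition of a regular $(m,2)$-ovoid (cf.\ Theorem~\ref{thm:cdplanes} and the definition preceding this theorem), this yields $\chi_{\cL}=A\chi_{\cO}\in\< \bm{j} \> + V_1$. Now $A$ is equivariant for the stabiliser $H$ of $\cP'$ in the collineation group of $\cP$, and $H$ induces the full collineation group on $\cP'$; hence every operator $A^{T}E_jA$ (with $E_j$ the projection onto $V_j$ in the generator scheme of $\cP$) lies in the Bose--Mesner algebra of the line scheme of $\cP'$, and it suffices to show that for each $j\ge 2$ it has eigenvalue $0$ on $V_{00}=\< \bm{j} \>$ and on $V_{11}$. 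The eigenvalue on $\< \bm{j} \>$ is a positive multiple of $\|E_jA\bm{j}\|^{2}$, where $A\bm{j}$ is the characteristic vector of the set $\cL'$ of all generators of $\cP$ meeting $\cP'$ in a line; so here one needs that the complement of $\cL'$ --- the generators meeting $\cP'$ in a plane, i.e.\ the union of the generator-stars $[\rho]$ over the generators $\rho$ of $\cP'$ --- is a Cameron--Liebler set, which can be checked directly. The eigenvalue of $A^{T}E_jA$ on $V_{11}$ is the essential new ingredient: it vanishes for $j\ge 2$ by the result of~\cite{DBDR2023} describing how the generator eigenspaces of $\cP$ decompose as $H$-modules, or, alternatively, by evaluating the entries of $A^{T}E_jA$ directly, using that for generators $\pi,\sigma$ of $\cP$ with $\pi\cap W$ and $\sigma\cap W$ prescribed the distribution of $\sigma$ over the relations of $\cP$ is controlled by the residue $\ell^{\perp}/\ell$, which splits as $\bigl((\ell^{\perp}\cap W)/\ell\bigr)\perp W^{\perp}$. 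I expect this last point to be the main obstacle.

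For the parameter, I would count $|\cL|=|\cO|\cdot N$, where $N$ is the number of generators $\pi$ of $\cP$ with $\pi\cap W=\ell$ for a fixed line $\ell$ of $\cP'$. Writing $R=\ell^{\perp}/\ell=\bar{W}\perp W^{\perp}$ with $\bar{W}=(\ell^{\perp}\cap W)/\ell$, these $\pi$ correspond to the generators of $R$ meeting $\bar{W}$ trivially, and each such generator is the graph of an isometric embedding of $(W^{\perp},Q)$ into $(\bar{W},-Q)$; by Witt's extension theorem $N$ is therefore the order of the isometry group of $\bar{W}$ divided by that of the $(\dim\bar{W}-3)$-dimensional classical space complementary to an embedded $W^{\perp}$, and a short computation gives $N=q^{e+1}(q-1)(q^{e}+1)(q^{e+1}+1)$. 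Since $|\cO|=m(q^{e+2}+1)(q^{e+3}+1)$ and a Cameron--Liebler set of parameter $x$ in $\cP$ has $x(q^{e}+1)(q^{e+1}+1)(q^{e+2}+1)(q^{e+3}+1)$ generators, we conclude $x=mq^{e+1}(q-1)$.
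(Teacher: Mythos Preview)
The paper does not give its own proof of this theorem; it is stated as a specialisation of results from~\cite{DBDR2023}. So there is nothing within the paper to compare your argument against directly.

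Your outline is sound in its architecture and in the parts you actually carry out. The dimension count showing that every generator of $\cP$ meets $\cP'$ in a line or a plane is correct, the identity $\chi_{\cL} = A\chi_{\cO}$ is correct, the observation that $A^{T} E_j A$ lies in the Bose--Mesner algebra of the line scheme of $\cP'$ (because the stabiliser of $W$ surjects onto the collineation group of $\cP'$ and each $E_j$ is $\Aut(\cP)$-invariant) is correct, and your parameter computation checks out --- your formula $N = q^{e+1}(q-1)(q^{e}+1)(q^{e+1}+1)$ is right in both the $O^+(10,q)$ and $O(11,q)$ cases. The claim that $A\bm{j}=\chi_{\cL'}$ is Cameron--Liebler can indeed be verified directly: summing the point-pencils $\chi_{[P]}$ over all points $P$ of $\cP'$ and using that each generator of $\cP$ contains either $q+1$ or $q^{2}+q+1$ points of $\cP'$ expresses $\chi_{\overline{\cL'}}$ as a combination of $\bm{j}$ and point-pencils.

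However, you have correctly located the gap in your own proposal: the assertion that $A^{T}E_{j}A$ has eigenvalue $0$ on $V_{11}$ for every $j\ge 2$ is the entire content of the theorem, and you do not prove it. You offer two routes --- invoking the module decomposition from~\cite{DBDR2023}, or a direct entry computation --- but carry out neither. The first route is exactly what the paper itself does, so in that sense you and the paper are on equal footing. If you want a self-contained argument, the direct computation is feasible but genuinely laborious: for each of the five relations on pairs $(\ell,\ell')$ of lines in $\cP'$ you must determine the joint distribution of generators $\pi,\sigma$ of $\cP$ with $\pi\cap W=\ell$ and $\sigma\cap W=\ell'$ over the relations of the dual polar graph of $\cP$, and then contract against the rows of its $Q$-matrix. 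Your remark that the residue $\ell^{\perp}/\ell$ splits as $\bar{W}\perp W^{\perp}$ is the right starting point, but the calculation itself is not short and is where the real work lies.
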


The Cameron--Liebler sets that arise when $\cO$ is an embedded generalized quadrangle of $\cP'$ are trivial,
and the examples arising when $\cO$ comes from a hemisystem in $O^-(6,q) \subseteq O(7,q) \subseteq O^+(10,q)$
are already described in~\cite{DBDR2023}, but we do get a new family of Cameron--Liebler sets
from the $(q+1,2)$-ovoids that are constructed from an ovoid of $O(7,q) \subseteq O^+(10,q)$ when $q=3^h$.
\begin{Corollary}
 The polar space $O^+(10,q)$ contains a Cameron--Liebler set of generators
 with parameter $(q^2 -1)q$ whenever $q = 3^h$.
\end{Corollary}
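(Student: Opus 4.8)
The plan is to deduce this corollary directly from the preceding theorem together with the construction described in \S\ref{subsec:ovoids}. Recall that when $q = 3^h$, the parabolic quadric $O(7,q)$ admits an ovoid $\cO'$ (this is the classical result of Kantor, as cited via~\cite{BKLP2007}). As observed in \S\ref{subsec:ovoids}, taking the union $\cO$ of the point-pencils of lines through the points of $\cO'$ yields a set of lines with $\chi_{\cO} \in \langle \bm{j}\rangle + V_{11}$; in particular $\cO$ is a regular set of lines in $O(7,q)$. The first step is therefore to verify that $\cO$ is a \emph{regular $(m,2)$-ovoid} with $m = q+1$: since $\chi_{\cO} \in \langle\bm{j}\rangle + V_{11} \subseteq \langle\bm{j}\rangle + V_{11} + V_{21}$, by Theorem~\ref{thm:cdplanes} it is a combinatorial design with respect to planes, and the number $m$ of lines of $\cO$ in a plane $\Pi$ is determined by $|\cO| = m(q^{e+1}+1)(q^{e+2}+1)$. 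Using $e=1$ for $O(7,q)$ and $|\cO| = (q+1)(q^{e+1}+1)(q^{e+2}+1)$ from \S\ref{subsec:ovoids}, we read off $m = q+1$; alternatively, each plane of $O(7,q)$ meets the ovoid $\cO'$ in a unique point $P$, and exactly the $q+1$ lines of $\Pi$ through $P$ lie in the point-pencil at $P$, so every plane contains precisely $q+1$ lines of $\cO$.

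The second step is to invoke the Theorem with $\cP = O^+(10,q)$, $\cP' = O(7,q)$ embedded in $\cP$, and the regular $(q+1,2)$-ovoid $\cO$ just constructed. The embedded rank-$3$ polar space inside $O^+(10,q)$ is indeed of type $O(7,q)$ (matching the case ``$O^+(10,q)$ with $O(7,q)$'' in the Theorem), and the relevant parameter for $O(7,q)$ as a rank-$3$ polar space is $e = 1$. The Theorem then produces a Cameron--Liebler set of generators of $O^+(10,q)$ with parameter $m q^{e+1}(q-1) = (q+1)\cdot q^{2}\cdot(q-1) = q(q^2-1)q \cdot \tfrac{1}{?}$; more carefully, $m q^{e+1}(q-1) = (q+1) q^{2} (q-1) = q^{2}(q^{2}-1)$. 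Wait --- the stated parameter in the Corollary is $(q^2-1)q$, so I would double-check the exponent: with $e=1$ we have $q^{e+1} = q^{2}$, giving $m q^{e+1}(q-1) = (q+1)q^{2}(q-1) = q^{2}(q^2-1)$, whereas the Corollary claims $q(q^2-1)$; the discrepancy of one factor of $q$ suggests that in the Theorem the relevant $e$ is that of the \emph{ambient} rank-$5$ space or that $q^{e+1}$ should be interpreted with the rank-$3$ value $e=1$ but an off-by-one in my reading --- I would reconcile this against the precise statement of the Theorem before finalizing, but the structural argument is unaffected.

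The third and final step is to confirm that the resulting Cameron--Liebler set is genuinely new, i.e.\ not one of the trivial or previously known examples flagged in the paragraph preceding the Corollary: the trivial ones arise when $\cO$ is an embedded generalized quadrangle, and the known ones arise from hemisystems in $O^-(6,q) \subseteq O(7,q)$; our $\cO$ comes from an \emph{ovoid} of $O(7,q)$, which is a distinct construction (and exists precisely in the regime $q = 3^h$), so the example is new. I expect the only real subtlety to be bookkeeping: correctly tracking the parameter through the Theorem (the exponent on $q$ in $mq^{e+1}(q-1)$ versus the Corollary's $(q^2-1)q$), and making sure the embedding $O(7,q) \hookrightarrow O^+(10,q)$ is the one to which the Theorem applies. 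There is no hard analytic or combinatorial obstacle: once the $(q+1,2)$-ovoid is recognized as regular (which is already done in \S\ref{subsec:ovoids}) and the ovoid of $O(7,q)$ is cited for $q = 3^h$, the Corollary is an immediate specialization of the Theorem.
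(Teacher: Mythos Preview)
Your approach is correct and matches the paper's, which treats the Corollary as an immediate specialization of the preceding Theorem applied to the $(q+1,2)$-ovoid coming from an ovoid of $O(7,q)$ (\S\ref{subsec:ovoids}). The parameter discrepancy you flag resolves once you observe that the $e$ in the Theorem's formula $mq^{e+1}(q-1)$ is the parameter of the ambient rank-$5$ space $\cP = O^+(10,q)$, for which $e=0$; hence $mq^{e+1}(q-1) = (q+1)\cdot q \cdot (q-1) = q(q^2-1)$, as stated.
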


\appendix

\section{\texorpdfstring{The $Q$-matrix and the vectors $aQ$}{The Q-matrix and the vectors aQ}}
\label{appendix}

We recommend the reader to enter the eigenvalue matrix $P$ into a suitable computer algebra
system and then calculate the dual eigenvalue matrix $Q$ as $nP^{-1}$ there and
then check all our claims about $aQ$ by computer.
We believe this to be the quickest option.
Here are some alternatives which should be feasible by hand:
The reader can obtain $Q$ using $Q = \Delta_P^{-1} P^T \Delta_Q$,
where $\Delta_P$ and $\Delta_Q$ are diagonal matrices having the first row
of $P$, respectively, $Q$ as their entries.
The reader may also use the explicit $Q$-matrix from below.
After we have stated the $Q$ matrix, we will also give the vectors $b = aQ$
which occur in \S\ref{sec:genexs}. Their correctness can be verified using $bP = na$.

\bigskip

The matrix $Q$ is
{
\[\begin{pmatrix}
1 & \frac{{{q}^{e{+}1}} \vartheta  \left( {{q}^{e{+}1}}{+}1\right) }{{{q}^{e}}{+}q} & \frac{{{q}^{2}} \left( q{+}1\right)  \nu }{{{q}^{e}}{+}q} & \frac{{{q}^{2 e{+}1}} \vartheta  \nu }{{{q}^{e}}{+}{{q}^{2}}} & \frac{{{q}^{e{+}3}} \vartheta  \nu }{{{q}^{e}}{+}{{q}^{2}}}\\
1 & \frac{{{q}^{e}} \vartheta \eta \left( {{q}^{e{+}1}}{+}1\right)   }{\left( q{+}1\right)  \left( {{q}^{e}}{+}1\right)  \left( {{q}^{e}}{+}q\right) } & {-} \frac{q \nu }{{{q}^{e}}{+}q}  & \frac{\left( q{-}1\right)  {{q}^{2 e}} \vartheta  \nu }{\left( {{q}^{e}}{+}1\right)  \left( {{q}^{e}}{+}{{q}^{2}}\right) } & {-} \frac{{{q}^{e{+}2}} \vartheta  \nu }{\left( q{+}1\right)  \left( {{q}^{e}}{+}{{q}^{2}}\right) } \\
1 & \frac{\vartheta  \left( {{q}^{e{+}1}}{-}1\right)  \left( {{q}^{e{+}1}}{+}1\right) }{\left( q{+}1\right)  \left( {{q}^{e}}{+}q\right) } & \frac{\left( {{q}^{2}}{+}1\right)  \nu }{{{q}^{e}}{+}q} & {-} \frac{{{q}^{e}} \vartheta  \nu }{{{q}^{e}}{+}{{q}^{2}}}  & \frac{q \vartheta  \left( {{q}^{e}}{-}q\right)  \nu }{\left( q{+}1\right)  \left( {{q}^{e}}{+}{{q}^{2}}\right) }\\
1 & \frac{q \vartheta \eta \left( {{q}^{e{+}1}}{+}1\right)   }{q \left( q{+}1\right)  \left( {{q}^{e}}{+}1\right)  \left( {{q}^{e}}{+}q\right) } & {-} \frac{q \nu }{{{q}^{e}}{+}q}  & {-} \frac{\left( q{-}1\right)  {{q}^{e{-}1}} \vartheta  \nu }{\left( {{q}^{e}}{+}1\right)  \left( {{q}^{e}}{+}{{q}^{2}}\right) }  & \frac{q \vartheta  \nu }{\left( q{+}1\right)  \left( {{q}^{e}}{+}{{q}^{2}}\right) }\\
1 & {-} \frac{\vartheta  \left( {{q}^{e{+}1}}{+}1\right) }{q \left( {{q}^{e}}{+}q\right) }  & \frac{\left( q{+}1\right)  \nu }{q \left( {{q}^{e}}{+}q\right) } & \frac{\vartheta  \nu }{q \left( {{q}^{e}}{+}{{q}^{2}}\right) } & {-} \frac{\vartheta  \nu }{q \left( {{q}^{e}}{+}{{q}^{2}}\right) } \end{pmatrix},\]
}
where $\vartheta = q^2+q+1$, $\eta = {{q}^{e{+}1}}{+}{{q}^{2}}{+}q{-}1$, and $\nu = {{q}^{e{+}2}}{+}1$.

\bigskip

\noindent
For $a$ in \S\ref{subsec:planes},
{\footnotesize
\[
 aQ = \left(q^2+q+1,	\frac{q^{e+1}(q+1)(q^2+q+1)(q^{e+1}+1)}{q^e+1},	0,	\frac{q^{2e+1}(q^2+q+1)(q^{e+2}+1)}{q^e+1},	0\right).
\]
}
For $a$ in \S\ref{subsec:ptpen},
{\footnotesize
\[
 aQ = \left((q{+}1)(q^{e+1}{+}1),	\frac{q^{e+2}(q^2{+}q{+}1)(q^e{+}1)(q^{e+1}{+}1)}{q^e{+}q},	\frac{q^3(q^{e+1}{+}1)(q^{e+2}{+}1)}{q^e{+}q},	0,	0\right).
\]
}
For $a'$ in \S\ref{subsec:ptpen},
{\footnotesize
\begin{align*}
 a'Q = \left(  q^2(q^e+1)(q^{e+1}+1) \vphantom{\frac{q^e (q+1){(q-1)}^2(q^2+q+1)(q^{e+1}+1)}{q^e+q}} \right.,	& \frac{q^e (q+1){(q-1)}^2(q^2+q+1)(q^{e+1}+1)}{q^e+q},
 \\ & \left. \frac{q(q+1)(q^e+1)(q^{e+1}+1)(q^{e+2}+1)}{q^e+q},	0,	0\right).
\end{align*}
}
For $a$ in \S\ref{subsec:systems},
{\footnotesize
\begin{align*}
 aQ = \left( q^{e+2}+1,	0, \vphantom{\frac{q^{e+1}(q^2+q+1)(q^e+1)(q^{e+2}+1)}{q^e+q^2}} \right.	& q(q+1)(q^{e+2}+1),	\frac{q^{e+1}(q^2+q+1)(q^e+1)(q^{e+2}+1)}{q^e+q^2},\\
 & \left. \frac{q^{e+1}(q^2-1)(q^2+q+1)(q^{e+2}+1)}{q^e+q^2} \right).
\end{align*}
}
For $a$ in \S\ref{subsec:subrank3},
{
\begin{align*}
 aQ = \left( (q^2+q+1)(q^e+1)(q^{e+1}+1),	q^e(q^2-1)(q^2+q+1)(q^{e+1}+1),	0,	0,	0\right).
\end{align*}
}
For $a$ in \S\ref{subsec:gq},
{
\begin{align*}
 aQ = \left( (q^{e+1}+1)(q^{e+2}+1),	0,	q(q+1)(q^{e+1}+1)(q^{e+2}+1),	0,	0\right).
\end{align*}
}
For $a$ in \S\ref{subsec:spreads},
{
\begin{align*}
 aQ = \left( (q^2+q+1)(q^{e+2}+1),	0,	0,	q^{e+1}(q^2+q+1)(q^{e+2}+1),	0\right).
\end{align*}
}
For $a$ in \S\ref{subsec:gh},
{
\begin{align*}
 aQ = \left( (q^2+q+1)(q^{3}+1),	0,	0,	q^{2}(q^2+q+1)(q^{3}+1),	0\right).
\end{align*}
}

\bibliographystyle{plain}
\bibliography{regular_line_set_rank_3}

\end{document}